\newcommand{\e}{\varepsilon}
\newcommand{\NN}{\mathbb{N}}
\newcommand{\zet}{\mathbb{Z}}
\newtheorem*{rigprob*}{Rigidity Problem for uniform Roe Algebras}
\newtheorem*{rigprobcorona*}{Rigidity Problem for uniform Roe Coronas}
\newcommand{\cstar}{$\mathrm{C}^*$}
\newcommand{\diag}{\mathrm{diag}}
\newcommand{\Aut}{\mathrm{Aut}}
\newtheorem{theorem}{Theorem}[section]
\newtheorem*{theorem*}{Theorem}
\newtheorem{proposition}[theorem]{Proposition}
\newtheorem{problem}[theorem]{Problem}
\newtheorem*{proposition*}{Proposition}
\newtheorem{lemma}[theorem]{Lemma}
\newtheorem*{lemma*}{Lemma}
\newtheorem*{corollary*}{Corollar}
\newtheorem*{fact*}{Fact}
\theoremstyle{definition}
\newtheorem{definition}[theorem]{Definition}
\newtheorem*{definition*}{Definition}
\newtheorem*{claim*}{Claim}
\newtheorem*{conjecture*}{Conjecture}
\theoremstyle{remark}
\newtheorem*{example*}{Example}
\newtheorem{remark}[theorem]{Remark}
\newtheorem*{remark*}{Remark}
\newtheorem*{note*}{Note}
\newtheorem*{question*}{Question}
\newtheorem*{acknowledgements*}{Acknowledgements}
\newcommand{\norm}[1]{\left\lVert #1 \right\rVert}
\DeclareMathOperator{\supp}{supp}
\newcommand{\Fraisse}{Fra\"{i}ss\'e }
\newcounter{my_enumerate_counter}
\newcommand{\pushcounter}{\setcounter{my_enumerate_counter}{\value{enumi}}}
\newcommand{\popcounter}{\setcounter{enumi}{\value{my_enumerate_counter}}}
\begin{document}

 \title{Fra\"{i}ss\'e theory in operator algebras}

\author[A. Vignati]{Alessandro Vignati}
\address[A. Vignati]{
Institut de Math\'ematiques de Jussieu - Paris Rive Gauche (IMJ-PRG)\\
Universit\'e Paris Cit\'e\\
B\^atiment Sophie Germain\\
8 Place Aur\'elie Nemours \\ 75013 Paris, France}
\email{ale.vignati@gmail.com}
\urladdr{http://www.automorph.net/avignati}

\date{\today}%
 
\maketitle
\begin{abstract}
We overview the development of \Fraisse theory in the setting of continuous model theory, and some of the its recent applications to \cstar-algebra theory and functional analysis.
\end{abstract}

\section{Introduction}
This short note aims to review the recent progress in the study of the applications of \Fraisse theory to continuous model theory, and in particular to operator algebras.

\Fraisse theory, after \cite{Fraisse}, is an area of mathematics at the crossroads of combinatorics and model theory. In the discrete setting, \Fraisse theory studies countable homogeneous structures and their relations with properties of their finitely-generated substructures. Given a countable structure $\mathcal M$, its \emph{age} is the class of all finitely-generated substructures of $\mathcal M$. Ages of countable homogeneous structures satisfy certain combinatorial properties (notably amalgamation properties) making them \Fraisse classes. Conversely, given any \Fraisse class, one constructs a countable homogeneous structure, its \Fraisse limit, a unique (up to isomorphism) structure with the given class as its age. Many interesting objects across mathematics (in group theory, graph theory, and topology) were identified as \Fraisse limits (see for example \cite[Chapter 7]{Hodges} and \cite{IrwSol.Pseudo}).

\Fraisse theory has found applications in several areas of mathematics. An example deserving to be mentioned is topological dynamics: In the seminal \cite{KPT}, Kechris, Pestov, and Todor\v{c}evi\`c established what is now known as the KPT-correspondence, linking Ramsey theoretical property of a \Fraisse class and dynamical properties of the automorphism group of its \Fraisse limit. The KPT correspondence is used to compute, and study properties of, universal minimal flows of automorphism groups of \Fraisse limits via Ramsey theoretic conditions on the \Fraisse class of interest. (The universal minimal flow of a topological group $G$ is a compact dynamical system of $G$ canonically associated with the group which is of great interest in topological dynamics, see~\S\ref{s.ramsey}.) The KPT correspondence and its extensions were used to study automorphism groups of \Fraisse limits arising from certain classes of structures such as graphs, directed graphs, posets, lattices, and so on. 

\Fraisse theoretic ideas were discussed in analysing the class of finite-dimensional Banach spaces and its amalgamation properties by Kubi\`s and Solecki in \cite{KubisSolecki}. Before a proper formalisation of \Fraisse theory for continuous structures, objects of \Fraisse theoretic nature were studied via topological and categorical methods, (see e.g. \cite{Kubis.Metric} and \cite{KwiaBart}). A systematic approach to \Fraisse theory in the setting of continuous logic was discussed in \cite{Schoretsanitis} and then introduced in \cite{BY:Fraisse}. Notably, many structures in functional analysis have been recognized to be of \Fraisse theoretic nature, for example in the work of Ben Yaacov and collaborators (\cite{BenYaacov.Linear,BenYHenson}), Kubi\`s and his collaborators (\cite{KubisGarbu,KubisKwia,Kubis.Metric,KubisFr2}), Conley and T\o rnquist (\cite{TornConley.Fraisse}), and Lupini (\cite{Lupini.fa,Lupini.NCGurarij}). Operator algebra examples were treated first in \cite{EFH:Fraisse}, then in Masumoto's work (\cite{Masumoto.FraisseZ} and \cite{Masumoto.Real}), and further in \cite{GhasemiKubis} and \cite{JacelonVignati}.

Here, we follow the approach of \cite{BY:Fraisse}. There, Ben Yaacov treats formally classes of metric structures, and gives the definition of \Fraisse classes and \Fraisse limits. Fix a language for metric structures $\mathcal L$. Informally, the \Fraisse theorem for continuous logic asserts that if $\mathcal K$ is a class of finitely-generated $\mathcal L$-structures which is hereditary, directed, satisfies reasonable separability conditions, and has the \emph{near amalgamation property}, then there is a unique separable $\mathcal L$-structure $M$ which has $\mathcal K$ as its age and is ultrahomogeneous, meaning that, given a finite tple $\bar a$ in $M$ the closure of the automorphism orbit of $\bar a$ contains all tuples with the same quantifier-free type of $\bar a$. A version of the \Fraisse theorem for nonhereditary classes, restricting the amount of homogeneity one obtains, also holds.

Pivotal objects in operator algebras and functional analysis have been recognised as \Fraisse limits of appropriate classes of structures. We give two examples of crucial importance in their respective areas: the Jiang-Su algebra $\mathcal Z$ and the Poulsen simplex $\mathbf P$.

The Jiang-Su algebra $\mathcal Z$ was constructed by Jiang and Su in \cite{JiangSu} as a limit of subhomogeneous \cstar-algebras known as dimension drop algebras (see \S\ref{ss.blocks}). $\mathcal Z$ has been at the center of the classification program for \cstar-algebras. In a precise sense, $\mathcal Z$ is the smallest possible \cstar-algebra that tensorially absorbs itself in a very strong way, and so, from this perspective, is the most natural \cstar-analogue of the hyperfinite II$_1$ factor $\mathcal R$, at the center of Connes' Fields Medal work on von Neumann algebras (\cite{Connes.Class}).
A \cstar-algebra $A$ which satisfies $A\otimes \mathcal Z\cong A$ is called $\mathcal Z$-absorbing. By a celebrated result of many hands, $\mathcal Z$-absorption is a regularity condition, capable of detecting those (suitable) \cstar-algebras which can be classified. More precisely, separable $\mathcal Z$-absorbing \cstar-algebras which are in addition unital, simple, and nuclear (and satisfy the technical condition known as UCT) are precisely those which can be classified by their Elliott invariant (see \cite{GongLinNiu,GongLinNiu.Class1} or \cite[Corollary D]{CETWW}). Further, the Toms-Winter conjecture asserts that $\mathcal Z$-absorption is linked to other well-behavedness and regularity conditions related to low dimensionality (in terms of the nuclear dimension of Winter and Zacharias, \cite{WinterZac.Dimnuc}) and perforation properties of the Cuntz semigroup (a refined version of $K$-theory defined by Cuntz). $\mathcal Z$ has been the focus of prominent research in the last 20 years for its unique and peculiar properties (see e.g. \cite{Winter.ICM}), and has been approached by \Fraisse theoretic methods in \cite{EFH:Fraisse}, and then in \cite{Masumoto.FraisseZ} and \cite{Masumoto.Real}. $\mathcal Z$'s nonunital cousins, $\mathcal W$ and $\mathcal Z_0$, play the role of $\mathcal Z$ in the classification program of nonunital \cstar-algebras; they were approached by \Fraisse theoretic methods in \cite{JacelonVignati}.

The theory of Choquet simplices lies at the intersection of Banach space theory, convex geometry, and the study of Polish spaces. In fact, Kadison's representation theorem (\cite[Theorem II.1.8]{Alfsen.Book}) gives a contravariant equivalence between the categories of compact convex sets (with continuous affine maps) and that of function systems (with positive linear maps). 
 Choquet simplices correspond to function systems that are moreover Lindenstrauss spaces (preduals of $L_1$ spaces), and are therefore connected to measure theory, and consequently the whole of functional analysis. 
The Poulsen simplex $\mathbf P$ was introduced in \cite{Poulsen} as a metrizable Choquet simplex whose extreme points are dense. Despite the fact that Poulsen's original construction has many degree of freedom, in \cite{Lind-Olsen-Sternfeld} (see also \cite{Olsen}), $\mathbf P$ was shown to be the unique metrizable Choquet simplex whose extreme points are dense (up to affine homeomorphism). The study of $\mathbf P$ has been inspired by that of Gurarij's universal and homogeneous Banach space $\mathbf G$ (see \S\ref{s.fa}). In fact, as specifically stated in \cite{Lind-Olsen-Sternfeld}, the idea to prove uniqueness of the Poulsen simplex came directly from Lusky's proof of the uniqueness of Gurarij's generic Banach space (\cite{Lusky}). $\mathbf P$ has many homogeneity and universality properties (which can be reconstructed by \Fraisse theoretic methods, as noticed in \cite{Lupini.fa}).

Our overview of the applications of \Fraisse theory to the study of operator algebras and functional analysis is structured as follows. In \S\ref{s.prel}, we introduce \Fraisse classes and their limits. In \S\ref{s.opalg} we describe the findings of \cite{EFH:Fraisse}, \cite{Masumoto.FraisseZ} and \cite{JacelonVignati}, where the \cstar-algebras $\mathcal Z$, $\mathcal W$, $\mathcal Z_0$ and the unital separable UHF algebras were recognised as \Fraisse limits. We also see how to construct surjectively universal AF algebras as \Fraisse limits (\cite{GhasemiKubis}). In \S\ref{s.fa} we overview the applications of \Fraisse theory to functional analysis: we introduce the Gurarij spaces $\mathbf G$ and $\mathbf {NG}$, as well as the Poulsen simplex $\mathbf P$ and its noncommutative version, the noncommutative Choquet simplex $\mathbf {NP}$, and we describe how there can be viewed as \Fraisse limits of suitable classes. Lastly, in \S\ref{s.ramsey}, we go through some of the connections between Ramsey theory and topological dynamics of automorphisms groups of \Fraisse structures, describing some of the findings of \cite{BartosovaRamsey1} and \cite{BartosovaRamsey2}. 

\vspace{3pt}

In case the reader lacks the preliminary notions, it is redirected to \cite{Blackadar.OA} or Szab\`o's article for an introduction to \cstar-algebras, to \cite{BYBHU} or Hart's article for an introduction to continuous model theory, and to \cite{bourbaki} or Sinclair's article for the model theory of \cstar-algebras.

\subsection*{Acknowledgements}
The author is partially supported by the ANR project AGRUME (ANR-17-CE40-
0026) and by an Emergence en Recherche grant from the Universit\'e Paris Cit\'e - IdeX.

\section{\Fraisse classes and their limits}\label{s.prel}
In this section, we introduce \Fraisse classes and their limits. We follow an approach similar to that of \cite{BY:Fraisse}.

\begin{definition} 
A \emph{language} for metric structures $\mathcal L$ is a set of relation symbols $(R_i)_{i\in I}$ and function symbols $(f_j)_{j\in J}$. To each symbol of the language are attached a natural number (its arity) and a modulus of uniform continuity. $0$-ary functions are constants. An $\mathcal L$-\emph{structure} is a complete metric space $(X,d)$ together with interpretations for relation and function symbols.
\end{definition}

We always assume that our languages contain a special binary symbol, which is
interpreted by the distance function in all structures, analogously as one interprets equality in the discrete setting.

If $A$ is an $\mathcal L$-structure, $n\in\NN$, and $\bar a\in A^n$, we denote by $\langle \bar a\rangle$ the smallest $\mathcal L$-substructure containing $\bar a$ (i.e., it is stable under interpretations of all functions in $\mathcal L$). 
\begin{definition}
An $\mathcal L$-\emph{embedding} is a map which commutes with interpretations of the symbols in $\mathcal L$.
\end{definition}

We fix, for the remaining part of this section, a separable language for metric structures $\mathcal L$. All structures (substructures, embeddings, ...) are implicitly $\mathcal L$-structures ($\mathcal L$-substructures, $\mathcal L$-embeddings, ...).

\begin{definition}
Let $\mathcal K$ be a class of finitely-generated $\mathcal L$-structures. $\mathcal K$ is said to have:
\begin{itemize}
\item the \emph{Hereditary Property} (HP) if whenever $A\in\mathcal K$ and $B$ is a substructure of $A$, then $B\in\mathcal K$;
\item the 
 \emph{Joint Embedding Property} (JEP) if for all $A,B\in\mathcal K$ there is $C\in\mathcal K$ and two embeddings $\phi\colon A\to C$ and $\psi\colon B\to C$;
\item the 
 \emph{Near Amalgamation Property} (NAP) if whenever we are given $A$, $B$, and $C$ in $\mathcal K$, a finite set $F\subseteq A$ and $\e>0$, then for every pair of embeddings $\phi_1\colon A\to C$ and $\phi_2\colon B\to C$ there is $D\in\mathcal K$ and two embeddings $\psi_1,\psi_2\colon C\to D$ such that 
\[
d(\psi_1\circ\phi_1(a),\psi_2\circ\phi_2(a))<\e \text{ for all } a\in F,
\]
where $d$ is the distance predicate as computed in $D$.
\begin{center}
\begin{tikzpicture}[scale=0.5]
\node (A) at (0,-3) {$A$};
\node (C) at (3,0) {$B$};
\node (D) at (3,-6) {$C$};
\node (E) at (6,-3) {$D$};
\node (F) at (3,-3) {$\circlearrowleft$${}_{F,\e}$};
\draw (A) edge[->] node [above] {$\phi_1$} (C);
\draw (A) edge[->] node [below] {$\phi_2$} (D);
\draw (C) edge[->] node [above] {$\psi_1$} (E);
\draw (D) edge[->] node [below] {$\psi_2$} (E);
\end{tikzpicture}
\end{center}

\end{itemize}
\end{definition}
The final condition one must impose is the analogue of countability in the discrete setting. 
If $\bar a=(a_1,\ldots,a_n)$ and $\bar b=(b_1,\ldots,b_n)$ are tuples in a common structure $A$ with distance $d$, we abuse of notation and let
\[
 d(\bar a,\bar b):=\max_{i}d(a_i,b_i).
 \]
 
\begin{definition}
Let $\mathcal K$ be a class of finitely-generated $\mathcal L$-structures satisfying JEP and NAP. We denote by $\mathcal K_n$ the set of all pairs $(A,\bar a)$ where $\bar a\in A^n$ generates $A$, and $A\in\mathcal K$.

Define a pseudometric $d_n$ on $\mathcal K_n$ by setting
\[
d_n((A,\bar a),(B,\bar b))=\inf_{\phi,\psi}d(\phi(\bar a),\psi(\bar b)),
\]
where $\phi$ and $\psi$ range over all possible embeddings of $\langle a\rangle $ and $\langle b\rangle$ in a common $C\in\mathcal K$.

$\mathcal K$ is said to have the \emph{Weak Polish Property} (WPP) if each $(\mathcal K_n,d_n)$ is a separable space.
\end{definition}

\begin{definition}
A class of finitely-generated structures satisfying JEP, NAP, and the WPP is said to be a 
 \emph{\Fraisse class}. If the class satisfies in addition HP, it is said to be a \emph{hereditary \Fraisse class}.
\end{definition}

\begin{definition}
Let $\mathcal K$ be a class of finitely-generated $\mathcal L$-structures. A separable $\mathcal L$-structure is said to be a $\mathcal K$-\emph{structure} if it is the inductive limit of structures in $\mathcal K$. 

A $\mathcal K$-structure $M$ is a 
 \emph{\Fraisse limit} of $\mathcal K$ if 
\begin{enumerate}
\item $M$ is $\mathcal K$-\emph{universal}: each $\mathcal K$-structure admits an embedding into $M$;
\item $M$ is \emph{approximately} $\mathcal K$-\emph{homogeneous}: for every $A=\langle \bar a\rangle\in\mathcal K$, $\e>0$, and embeddings $\phi_1,\phi_2\colon A\to M$ there is $\sigma$, an automorphism of $M$, such that 
\[
d(\sigma(\phi_1(\bar a)),\phi_2(\bar a))<\e,
\] 
where $d$ is the distance in $M$.
\end{enumerate}
\end{definition}

The following result is the main result relating \Fraisse classes and homogeneous structures. See \cite{BY:Fraisse} for a proof.
\begin{theorem}
Every \Fraisse class has a \Fraisse limit which is unique up to isomorphism.
\end{theorem}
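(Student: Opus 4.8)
The plan is to prove existence and uniqueness separately, adapting the classical back-and-forth machinery of discrete \Fraisse theory to the metric setting, where exact equalities are everywhere replaced by $\e$-approximations and one must control the accumulation of errors along an infinite construction.

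For \textbf{existence}, I would build the limit as a directed sequence of structures in $\mathcal K$ whose union (more precisely, completion of the union) is the desired $M$. First I would fix a countable dense subset of each $(\mathcal K_n, d_n)$, which exists by the WPP, and enumerate all ``tasks'': triples consisting of a structure $A = \langle \bar a\rangle$ in $\mathcal K$ (ranging over a countable dense family), an $\e > 0$ from a countable set, and an embedding of $A$ into an already-constructed stage. Then I would construct inductively a chain $A_0 \to A_1 \to \cdots$ of structures in $\mathcal K$, at each step using NAP to amalgamate so as to resolve one pending task: given an embedding $\phi_1 \colon A \to A_k$, I would find a later stage $A_{k'}$ and an embedding $\phi_2 \colon A \to A_{k'}$ such that the two copies are $\e$-close inside $A_{k'}$. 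A bookkeeping argument ensures every task is eventually handled. Taking $M$ to be the completion of $\bigcup_k A_k$, one checks that $M$ is a $\mathcal K$-structure, that $\mathcal K$-universality follows from JEP together with the directedness built into the construction, and that approximate $\mathcal K$-homogeneity follows from the fact that every $\e$-approximation task was addressed cofinally often.

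For \textbf{uniqueness}, suppose $M$ and $N$ are both \Fraisse limits of $\mathcal K$. I would run a back-and-forth argument to produce an isomorphism, but since we only have \emph{approximate} homogeneity, the isomorphism must be obtained as a limit of partial approximate maps. The idea is to build two increasing sequences of finitely-generated substructures exhausting dense subsets of $M$ and of $N$ respectively, together with embeddings that agree with each other up to rapidly shrinking errors $\e_k$ (say $\e_k = 2^{-k}$). At the odd steps one extends on the $M$-side using universality of $N$ to embed the new piece, then corrects the discrepancy using approximate homogeneity of $N$ to align it with the previously built map; at the even steps one does the symmetric thing. Because the errors are summable, the partial maps converge uniformly on the dense subsets to a well-defined surjective isometry $M \to N$ which commutes with the interpretations of all symbols in $\mathcal L$, i.e.\ an $\mathcal L$-isomorphism.

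The main obstacle, and the place where the continuous setting genuinely departs from the discrete one, is \textbf{controlling the error propagation} in the back-and-forth. In the discrete case each correction is exact and the construction stabilizes on finite pieces; here every application of approximate homogeneity introduces a fresh automorphism that is only $\e$-accurate, and these automorphisms must be composed across infinitely many stages. I would therefore need to be careful to choose the tolerances $\e_k$ summably in advance and to verify, using the prescribed moduli of uniform continuity attached to the function and relation symbols, that a small perturbation of the generators produces only a small perturbation of the entire (countable, dense) generated substructure; this is what guarantees that the limiting map is a genuine isometric $\mathcal L$-isomorphism rather than merely an approximate one. The separability guaranteed by the WPP is exactly what makes the countable bookkeeping feasible, so the interplay between WPP, the moduli of continuity, and the summability of errors is the technical heart of the argument.
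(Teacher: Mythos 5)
The paper does not prove this theorem itself but defers to \cite{BY:Fraisse}; your sketch follows essentially the same strategy as that reference --- existence via a generic chain built by countable bookkeeping over tasks drawn from WPP-dense families and resolved with NAP, and uniqueness via an approximate back-and-forth with summable tolerances, with the moduli of uniform continuity controlling how perturbations of generators propagate. The only places where your outline is lighter than the full argument are that approximate $\mathcal K$-homogeneity of the limit itself already requires the back-and-forth machinery (to upgrade the $\e$-alignment of two copies of $A$ achieved at some finite stage into a genuine automorphism $\sigma$ of $M$), and that $\mathcal K$-universality must cover arbitrary $\mathcal K$-structures, i.e.\ inductive limits of members of $\mathcal K$, which needs an approximate-intertwining argument rather than just JEP and directedness; both are handled by exactly the summable-error technique you describe for uniqueness, so these are omissions of routine detail rather than gaps in the approach.
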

\begin{definition}
Let $\mathcal K$ be a \Fraisse class with \Fraisse limit $M$. Let $(A_n,\varphi_n)$ be a sequence of $\mathcal K$ objects together with embeddings $\varphi_n\colon A_n\to A_{n+1}$. The sequence $(A_n,\varphi_n)$ is said to be 
 \emph{generic} if $\lim (A_n,\varphi_n)\cong M$.
\end{definition}

We conclude this section with a few remarks.

\begin{remark}\label{remark1}
\begin{itemize}
\item Our approach is slightly different from that of \cite{BY:Fraisse}, which is more technical and involved, as translating notions and theorems from discrete model theory to the continuous setting has to be done always extremely carefully. The setting displayed suffices for all the applications we are going to review. (For examples, notice that the classes introduced here are incomplete in the sense of \cite{BY:Fraisse}. The completions of our classes will include their \Fraisse limits.)
\item
A second and simpler proof of the \Fraisse theorem is due to Todor Tsankov. The author (and others) would like very much to see such proof published. 
\item The \Fraisse classes appearing in \S\ref{s.opalg}, apart from the ones defining Bing's pseudoarc $\mathbb P$ and the hyperfinite II$_1$ factor $\mathcal R$, are not hereditary. This is due to the fact that the class of finitely-generated subalgebras of a given \cstar-algebra $A$ is often complicated, and very large. In fact, for the age of a \cstar-algebra $A$, amalgamation indeed fails most of the time. All examples considered in \S\ref{s.fa} are hereditary classes by their very nature.
\item While in the discrete setting many (though not all) well-known \Fraisse limits have theories with quantifier elimination, this is not true in the operator algebraic setting. It was proved in \cite{eagle2015quantifier} that the only separable infinite-dimensional unital \cstar-algebra having quantifier elimination in the usual language for \cstar-algebras is $C(2^\NN)$, the algebra of continuous functions on the Cantor set. On the other hand, objects from functional analysis arising as \Fraisse limits (such as the Urysohn space, or the Hilbert space $\ell_2(\NN)$), do have quantifier elimination in their natural languages.
\end{itemize}
\end{remark} 

\section{\Fraisse theory in operator algebras}\label{s.opalg}

Here, we describe the main applications of \Fraisse theory to operator algebras. Most of the classes we will be concerned with are not hereditary classes, as mentioned in Remark~\ref{remark1}. This is due to the fact that infinite-dimensional \cstar-algebras are often singly generated (see e.g. \cite{ThielWinter}). (In the unital abelian setting finitely-generated \cstar-algebras correspond to those whose spectrum is of finite covering dimension.) With plenty of finitely-generated structures to take care of, obtaining approximate amalgamation (even allowing natural expansions of the language) is therefore quite difficult. For this reason, we only consider classes that are made of suitably `small' algebras.

We will (almost) always work in the language of \cstar-algebras as introduced in \cite{FHS.II} (see also \cite{bourbaki}), or some of its natural expansions.

\subsection{Abelian examples}
We start by recording easy examples of topological nature.

\subsubsection*{The Cantor set}
The following is equivalent to the fact that the class of finite sets together with surjective maps forms a projective \Fraisse class\footnote{Projective \Fraisse theory can be seen as the dual version of the classical ones, where maps are often surjections. Projective \Fraisse theory have been used in topology (e.g., \cite{KwiaBart,IrwSol.Pseudo}) to recognise certain spaces are generic.} whose limit is the 
 Cantor set $2^\NN$.

\begin{theorem}
Let $\mathcal K$ be the class whose objects are finite-dimensional abelian \cstar-algebras and whose maps are unital injective $^*$-homomorphisms. Then $\mathcal K$ is a \Fraisse class whose \Fraisse limit is $C(2^\NN)$.
\end{theorem}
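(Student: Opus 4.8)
The plan is to pass to the dual (Gelfand) picture throughout. By Gelfand duality the objects of $\mathcal K$ are exactly the algebras $C(F)$ for $F$ a finite set; a unital ${}^*$-homomorphism $C(F)\to C(G)$ is the same thing as a map $G\to F$, and it is an embedding precisely when that dual map is surjective. I will verify the three axioms in this language. For JEP, given finite sets $F,G$ the two coordinate projections exhibit $F\times G$ as surjecting onto both, so $C(F)$ and $C(G)$ both embed unitally into $C(F\times G)\cong C(F)\otimes C(G)$. For amalgamation I would prove the stronger, \emph{exact} amalgamation property (which immediately gives NAP): given embeddings dual to surjections $p_1\colon\hat B\to\hat A$ and $p_2\colon\hat C\to\hat A$, form the fibre product $\hat D=\hat B\times_{\hat A}\hat C=\set{(b,c)}{p_1(b)=p_2(c)}$. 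The two coordinate projections are surjective because $p_1,p_2$ are, and they make the square commute, so dualizing produces $D=C(\hat D)$ with embeddings $\psi_1,\psi_2$ satisfying $\psi_1\phi_1=\psi_2\phi_2$ on all of $A$.

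For the WPP I would identify $\mathcal K_n$ metrically with the finite subsets of $\C^n$. A generating tuple $\bar a\in C(F)^n$ is the same as the injective evaluation map $F\to\C^n$, $x\mapsto(a_1(x),\dots,a_n(x))$, so $(C(F),\bar a)$ is coded by the finite configuration $P=\set{\alpha_x}{x\in F}$ of its distinct values. Unwinding $d_n$: an embedding into a common $C(H)$ is dual to a surjection $H\to F$, so a pair of embeddings of $\langle\bar a\rangle$ and $\langle\bar b\rangle$ amounts to a relation $R\subseteq F\times G$ surjecting onto both factors, and $d(\phi(\bar a),\psi(\bar b))=\max_{(x,y)\in R}\abs{\alpha_x-\beta_y}_\infty$. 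Minimizing over all such correspondences shows that $d_n(P,Q)$ is exactly the Hausdorff distance between $P$ and $Q$ for the sup-norm on $\C^n$. Since finite subsets of $(\QQ+i\QQ)^n$ are Hausdorff-dense, each $\mathcal K_n$ is separable, and $\mathcal K$ is a \Fraisse class.

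It remains to identify the limit. First, $C(2^\NN)=\varinjlim_n C(2^n)$, with connecting maps dual to the coordinate-forgetting surjections $2^{n+1}\to 2^n$, is a $\mathcal K$-structure. Any $\mathcal K$-structure is a separable unital abelian \cstar-algebra that is an inductive limit of finite-dimensional ones, hence of the form $C(X)$ with $X$ compact, metrizable and zero-dimensional; since $2^\NN$ surjects continuously onto every nonempty compact metric space, each such $C(X)$ embeds unitally into $C(2^\NN)$, giving universality. For homogeneity I would in fact establish \emph{exact} homogeneity: two embeddings $\phi_1,\phi_2$ of $A=C(F)$ are dual to surjections $f_1,f_2\colon 2^\NN\to F$, i.e.\ to clopen partitions $\set{f_i^{-1}(x)}{x\in F}$ of $2^\NN$. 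Because every nonempty clopen subset of $2^\NN$ is again a Cantor set, for each $x$ one may pick a homeomorphism $f_2^{-1}(x)\to f_1^{-1}(x)$ and glue these finitely many maps into a homeomorphism $h$ of $2^\NN$ with $f_1\circ h=f_2$. The dual automorphism $\sigma$ then satisfies $\sigma\phi_1=\phi_2$, so $d(\sigma(\phi_1(\bar a)),\phi_2(\bar a))=0<\e$. Thus $C(2^\NN)$ is a universal, approximately homogeneous $\mathcal K$-structure, and by uniqueness of the \Fraisse limit it is the limit of $\mathcal K$.

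I expect the main content to lie in the homogeneity step: the clean conclusion ``$f_1\circ h=f_2$ for some homeomorphism $h$'' rests on the self-similarity of the Cantor set (every nonempty clopen piece is itself a Cantor set), which is what upgrades approximate to exact homogeneity. A secondary, more bookkeeping obstacle is the WPP step, where one must carefully unwind the definition of $d_n$ to recognize it as the Hausdorff distance; everything else is routine once one works dually.
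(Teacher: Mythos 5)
Your proof is correct and takes the route the paper itself indicates: the paper justifies this theorem only by remarking that it is the Gelfand dual of the fact that finite sets with surjections form a projective \Fraisse class with limit $2^\NN$, and your argument is exactly that dualization carried out in full. The details you supply (fibre-product exact amalgamation, the identification of $d_n$ with the Hausdorff distance on finite subsets of $\C^n$ for WPP, and the clopen-partition gluing for exact homogeneity) are all sound.
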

\subsubsection*{The pseudoarc}
We dualize the content of \cite[\S4.3]{Kubis.Metric}, where it was proved that Bing's 
 pseudoarc $\mathbb P$ is the projective \Fraisse limits of intervals (even though in the language of metric enriched categories). In particular, we show that the algebra $C(\mathbb P)$ is a \Fraisse limit in the setting of \S\ref{s.prel}.

First, a lemma. (See \cite{Homma} for the original version, and \cite{SikoZara} for a proof of the result as stated below.)
\begin{lemma}[Mountain Climbing Lemma]
Let $f$ and $g$ be continuous piecewise linear surjections $[0,1]\to[0,1]$ such that $f(0)=0=g(0)$ and $f(1)=1=g(1)$. Then there are surjections $f',g'\colon [0,1]\to[0,1]$ such that $f\circ f'=g\circ g'$.
\end{lemma}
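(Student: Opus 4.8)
The plan is to prove the Mountain Climbing Lemma by a discrete combinatorial argument on the graphs of $f$ and $g$, thinking of it literally as two climbers ascending two mountain profiles while keeping their heights equal. First I would reduce to the case where $f$ and $g$ are piecewise linear with all local extrema occurring at distinct heights, and moreover where no two critical values of $f$ or of $g$ coincide; this genericity can be arranged by an arbitrarily small perturbation that does not affect the existence of the desired $f', g'$ (or, more carefully, one proves the generic case and then passes to a limit, but for the PL statement a direct combinatorial treatment is cleaner). The point of this reduction is that it makes the set of ``admissible positions'' into a finite graph.

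Next I would set up the configuration space
\[
\Gamma = \set{(s,t)\in[0,1]^2}{f(s)=g(t)}.
\]
A point $(s,t)\in\Gamma$ records a simultaneous position of the two climbers at a common height. The goal is to produce a path in $\Gamma$ from $(0,0)$ to $(1,1)$ along which both coordinates are (weakly) monotone, because such a path, suitably reparametrised, yields the surjections $f'$ and $g'$ with $f\circ f' = g\circ g'$. Under the genericity assumptions, $\Gamma$ is a finite union of line segments (the preimages of common height values), and its singular points are exactly the ``corners'' coming from critical points of $f$ or $g$. Away from corners each point of $\Gamma$ has exactly two locally admissible directions of travel (increasing or decreasing height). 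The key structural claim is that $\Gamma$, as a topological graph, has every vertex of even degree except the two endpoints $(0,0)$ and $(1,1)$, which have odd degree; this is a parity count at each critical point, analogous to the fact that a regular value of a map of an interval has an even number of preimages counted with the right multiplicities.

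I would then run the climbers' algorithm: start at $(0,0)$ and at each moment increase the common height, following the unique edge of $\Gamma$ that moves upward; when a corner is reached (a local max or min of $f$ or of $g$), the parity count guarantees there is a unique way to continue without retracing the edge just traversed, which forces one climber to momentarily pause or reverse while the other advances. Because the total height strictly increases except at the finitely many corners, and because each vertex except the endpoints has even degree, the walk cannot get stuck or loop, and it must terminate at the only other odd-degree vertex, namely $(1,1)$. Reading off the two coordinate functions along this walk and reparametrising by arc length gives continuous monotone surjections $f', g'\colon [0,1]\to[0,1]$ with $f\circ f' = g\circ g'$.

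The main obstacle, and the step deserving the most care, is the parity/degree analysis of $\Gamma$ at the corners together with the verification that the resulting coordinate functions are genuinely \emph{surjective} rather than merely monotone; surjectivity is what uses the hypotheses $f(0)=g(0)=0$ and $f(1)=g(1)=1$ essentially, ensuring the walk sweeps the full height range $[0,1]$ and hence that each of $f'$ and $g'$ attains every value in $[0,1]$. A secondary subtlety is the reduction to the generic PL case: one must check that the perturbation argument (or a compactness argument selecting a convergent subsequence of solutions for perturbed $f,g$) really recovers a solution for the original $f,g$, since surjections do not form a closed set under uniform limits without an equicontinuity or monotonicity safeguard. I would handle this by keeping the candidate $f', g'$ monotone throughout, so that uniform limits of monotone surjections of $[0,1]$ fixing the endpoints remain surjective.
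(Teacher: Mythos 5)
The paper does not prove this lemma itself; it only cites Homma and Sikorski--Zarankiewicz, so there is no in-text proof to compare against. Your overall strategy --- the configuration space $\Gamma=\{(s,t):f(s)=g(t)\}$, genericity reduction, and the parity/degree argument showing $(0,0)$ and $(1,1)$ are the only odd-degree vertices and hence lie in the same component --- is the standard proof and is the right skeleton.

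However, there is a genuine error running through your write-up: the claim that the path from $(0,0)$ to $(1,1)$ can be taken with both coordinates (and the common height) weakly monotone. This is impossible in general, not just unproved. If $f'$ and $g'$ were both monotone surjections of $[0,1]$ fixing the endpoints, then $f\circ f'$ and $g\circ g'$ would be mere reparametrisations of $f$ and $g$, so $f\circ f'=g\circ g'$ would force $f$ and $g$ to have the same ordered sequence of local extreme values; taking $f(x)=x$ and any non-monotone $g$ already kills this. Concretely, when the walk in $\Gamma$ crosses a local maximum of $f$, both climbers must subsequently descend, so the height is not monotone, and one of $s,t$ must decrease. Your algorithmic description (``increase the common height \dots one climber pauses or reverses while the other advances'') is internally inconsistent for the same reason. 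The fix is that monotonicity is simply not needed: once you have a continuous path in $\Gamma$ from $(0,0)$ to $(1,1)$, its coordinate projections $f',g'$ are continuous with $f'(0)=g'(0)=0$ and $f'(1)=g'(1)=1$, so surjectivity is immediate from the intermediate value theorem --- which also dissolves your worry about surjectivity surviving uniform limits. Two smaller points deserve care: the parity count must also handle vertices of $\Gamma$ on the boundary of the square (where $s\in\{0,1\}$ or $t\in\{0,1\}$ but $(s,t)\neq(0,0),(1,1)$), and your genericity perturbation cannot move the values at the endpoints, which are pinned by the hypotheses $f(0)=g(0)=0$, $f(1)=g(1)=1$.
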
 
Dualizing the content of the Mountain Climbing Lemma, we get the following result. (Since the proof in the current language has not appeared anywhere, we sketch it.)
\begin{theorem}
Let $\mathcal K_{\mathbb P}$ be the class whose only object is $C([0,1])$ and whose maps are unital injective $^*$-homomorphisms. Then $\mathcal K_{\mathbb P}$ is a \Fraisse class.
\end{theorem}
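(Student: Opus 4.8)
The plan is to work throughout in the Gelfand dual picture. By Gelfand duality a unital $^*$-homomorphism $C([0,1])\to C([0,1])$ has the form $f\mapsto f\circ p$ for a unique continuous $p\colon[0,1]\to[0,1]$, and it is injective (hence an embedding) exactly when $p$ is surjective; thus $\mathcal K_{\mathbb P}$ dualizes to the category with the single object $[0,1]$ whose morphisms are the continuous surjections $[0,1]\to[0,1]$, composition being reversed. In this language JEP is immediate: with a single object one takes the amalgam to be $C([0,1])$ with both maps the identity. For the WPP, observe that $(\mathcal K_{\mathbb P})_n$ consists of generating $n$-tuples of $C([0,1])$, and using the identity embeddings one gets $d_n\big((C([0,1]),\bar a),(C([0,1]),\bar b)\big)\le\max_i\|a_i-b_i\|_\infty$; hence $d_n$ is dominated by the supremum metric on the separable space $C([0,1])^n$, and since a pseudometric dominated by a separable metric is itself separable, each $(\mathcal K_{\mathbb P})_n$ is separable.

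The real content is NAP, which I would reduce to the Mountain Climbing Lemma. Fix embeddings $\phi_1,\phi_2$ of $A=C([0,1])$ into $B$ and $C$, a finite set $F\subseteq A$ and $\e>0$, and let $g_1,g_2\colon[0,1]\to[0,1]$ be the dual surjections, so that the required inequality $d(\psi_1\phi_1(a),\psi_2\phi_2(a))<\e$ reads $\|a\circ(g_1\circ h_1)-a\circ(g_2\circ h_2)\|_\infty<\e$ for all $a\in F$, where $h_1,h_2$ are the surjections dual to the sought embeddings $\psi_1,\psi_2$. Picking $\delta>0$ with $|x-y|<\delta\Rightarrow|a(x)-a(y)|<\e$ for every $a\in F$, it suffices to produce surjections $h_1,h_2$ with $\|g_1\circ h_1-g_2\circ h_2\|_\infty<\delta$. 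I would first approximate $g_1,g_2$ uniformly to within $\delta/2$ by piecewise linear surjections $\tilde g_1,\tilde g_2$, and then precompose each $\tilde g_i$ with a piecewise linear surjection $s_i$ carrying $0$ and $1$ to points at which $\tilde g_i$ attains $0$ and $1$, so that $\bar g_i:=\tilde g_i\circ s_i$ is a piecewise linear surjection with $\bar g_i(0)=0$ and $\bar g_i(1)=1$. The Mountain Climbing Lemma applied to $\bar g_1,\bar g_2$ yields surjections $r_1,r_2$ with $\bar g_1\circ r_1=\bar g_2\circ r_2$; setting $h_i:=s_i\circ r_i$ gives $\tilde g_1\circ h_1=\tilde g_2\circ h_2$ exactly, whence $\|g_1\circ h_1-g_2\circ h_2\|_\infty\le\|g_1-\tilde g_1\|_\infty+\|g_2-\tilde g_2\|_\infty<\delta$. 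Dualizing back, the $h_i$ are the Gelfand duals of embeddings $\psi_i\colon C([0,1])\to C([0,1])=D$ witnessing NAP, and since $h_i=s_i\circ r_i$ is a composite of surjections, each $\psi_i$ is indeed injective.

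The main obstacle I anticipate is the mismatch between the hypotheses of the Mountain Climbing Lemma and the data at hand: the lemma demands piecewise linear maps fixing the endpoints $0$ and $1$, whereas an arbitrary continuous surjection of $[0,1]$ need be neither piecewise linear nor endpoint-fixing, and one cannot repair the endpoint values by a uniform approximation (a surjection with $g(0)=1/2$ admits no endpoint-fixing uniform approximant). The device that circumvents this is the precomposition by $s_i$ above, which normalizes the endpoints exactly by enlarging the domain surjectively rather than by perturbing the map, thereby decoupling the endpoint normalization from the uniform approximation. The only remaining routine point is that piecewise linear surjections are uniformly dense among continuous surjections of $[0,1]$, which follows from mesh interpolation together with an affine rescaling restoring exact surjectivity.
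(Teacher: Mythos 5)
Your proof is correct and follows essentially the same route as the paper's: dualize via Gelfand duality, normalize the endpoints by precomposing with a surjection (rather than perturbing the map), reduce to piecewise linear surjections by uniform approximation on the finite set $F$, and invoke the Mountain Climbing Lemma to amalgamate. The only difference is the order of the approximation and endpoint-normalization steps, which is immaterial; your write-up of JEP and WPP fills in details the paper leaves implicit.
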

\begin{proof}[Sketch of the proof]
The only nontrivial property to prove is the Near Amalgamation Property. Fix a finite $F\subseteq C([0,1])$, $\e>0$, and two injective $^*$-homomorphisms $\phi_1,\phi_2\colon C([0,1])\to C([0,1])$. Since injections between abelian \cstar-algebras correspond to surjections between the corresponding spaces, consider the continuous surjections $\xi_1,\xi_2\colon [0,1]\to [0,1]$ such that $\phi_i(f)(t)=f(\xi_i(t))$ for $i=1,2$ and $t\in [0,1]$. First, we find two continuous surjections $\xi_1'$ and $\xi_2'$ with the property that $\xi_i\circ\xi_i'(0)=0$ and $\xi_i\circ\xi_i'(1)=1$ for $i=1,2$. Since those $^*$-homomorphisms whose duals are piecewise linear are dense in point norm topology, and $F$ is finite, we can assume that each $\xi_i\circ\xi_i'$, for $i=1,2$, is piecewise linear. The duals of the amalgamating surjections provided by the Mountain Climbing Lemma give the NAP.
\end{proof}

To study the \Fraisse limit of the class $\mathcal K_{\mathbb P}$ we use Gelfand duality, and study the topological properties of its spectrum. 

A connected compact Hausdorff space is a \emph{continuum}. A continuum $X$ is \emph{chainable} if every open cover of $X$ can be refined by a finite open cover $U_1,\ldots,U_n$ with the property that $U_i\cap U_j\neq\emptyset$ if and only if $|i-j|\leq 1$. $X$ is \emph{indecomposable} if it cannot be written as the union of two of its proper subcontinua, and \emph{hereditary indecomposable} if each of its subcontinua is indecomposable. The only metrizable chainable hereditary indecomposable continuum is Bing's pseudoarc $\mathbb P$ (\cite{Bing1}).

\begin{theorem}
The \Fraisse limit of $\mathcal K_{\mathbb P}$ is Bing's pseudoarc $\mathbb P$.
\end{theorem}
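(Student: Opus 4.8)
The plan is to show that the Gelfand spectrum $X$ of the \Fraisse limit $M = C(X)$ of $\mathcal K_{\mathbb P}$ is a metrizable chainable hereditary indecomposable continuum, and then invoke Bing's uniqueness theorem cited above to conclude $X \cong \mathbb P$. Since $M$ is a separable unital abelian \cstar-algebra arising as an inductive limit of copies of $C([0,1])$ (with injective, hence surjective-on-spectra, connecting maps), Gelfand duality immediately gives that $X = \varprojlim [0,1]$ is a metrizable continuum: it is compact and Hausdorff since it is the spectrum of a unital abelian \cstar-algebra, metrizable because $M$ is separable, and connected because an inverse limit of connected spaces along surjections is connected. So the three topological properties I must extract from the \Fraisse-theoretic structure are \emph{chainability} and \emph{hereditary indecomposability}.

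For chainability, I would argue that $X$, being an inverse limit of intervals along continuous surjections, is automatically chainable: each bonding map $[0,1] \to [0,1]$ pulls back the standard chain cover of the interval to a chain cover of the relevant approximating factor, and taking these covers through the inverse system produces, for any open cover of $X$, a refining finite chain cover. This is essentially the classical fact that inverse limits of arcs are chainable (snake-like) continua, and it does not even need the full force of the \Fraisse limit, only that $X$ is presented as such an inverse limit. I would state this cleanly and refer to the standard continuum-theory literature for the routine verification.

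The substantive step is \textbf{hereditary indecomposability}, and this is where the approximate homogeneity (condition (2) of the \Fraisse limit) together with the amalgamation witnessed by the Mountain Climbing Lemma does the real work. The plan is to translate the topological statement ``$X$ is hereditarily indecomposable'' into its standard crookedness characterization: a chainable continuum is hereditarily indecomposable if and only if for every $\e>0$ and every pair of chain covers, the bonding maps can be taken to be \emph{$\e$-crooked}. I would show that the genericity of the inverse sequence forces this crookedness. Concretely, given any finite approximation and tolerance $\e$, I would use the Mountain Climbing Lemma (the dual content of NAP) to produce at the next stage of the inverse system a bonding surjection whose graph is as crooked as required; approximate $\mathcal K_{\mathbb P}$-homogeneity guarantees that the generic inverse sequence actually realizes all such crooked maps cofinally, so every subcontinuum of $X$ inherits indecomposability. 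This mirrors exactly Bing's original inverse-limit construction of $\mathbb P$, now recovered abstractly from the \Fraisse limit.

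The main obstacle I anticipate is the faithful bookkeeping between the \Fraisse-theoretic approximation data (finite $F \subseteq C([0,1])$, $\e>0$, embeddings) and the geometric crookedness condition on the bonding maps: one must verify that the maps supplied by the Mountain Climbing Lemma can be iterated to achieve crookedness of arbitrarily high order while remaining compatible with the point-norm density of piecewise-linear maps used in the NAP sketch above. Rather than grind through the $\e$--$\delta$ combinatorics of nested chains, I would isolate a single lemma---that genericity of the sequence $(A_n,\varphi_n)$ implies the dual maps $\xi_n$ are eventually $\e$-crooked for every $\e$---and then cite the standard equivalence (crookedness $\Leftrightarrow$ hereditary indecomposability for chainable continua) to finish. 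The remaining uniqueness is then immediate from Bing's theorem.
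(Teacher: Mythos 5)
Your top-level strategy coincides with the paper's: pass to the Gelfand spectrum $X$ with $C(X)$ the \Fraisse limit, observe that $X$ is a metrizable continuum and that chainability is automatic for an inverse limit of arcs along surjections, establish hereditary indecomposability, and conclude by Bing's uniqueness theorem. Where you diverge is in the middle step. The paper does not argue crookedness directly; it dualises Kubi\`s' Lemmas 4.21 and 4.22 from the metric-enriched-category treatment of the pseudoarc, which show that $X$ is indecomposable and that every proper subcontinuum of $X$ is homeomorphic to $X$ itself (whence hereditary indecomposability). Your route through the classical characterization (a chainable continuum is hereditarily indecomposable iff the bonding maps can be taken $\e$-crooked for every $\e$) is the more traditional continuum-theoretic argument and is perfectly viable; it buys a self-contained proof that mirrors Bing's original construction rather than outsourcing to Kubi\`s, at the cost of the $\e$--$\delta$ bookkeeping you yourself anticipate.

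One point needs correcting, because as written it is a genuine misattribution rather than a stylistic choice: the Mountain Climbing Lemma does \emph{not} produce crooked maps. Its role in this paper is exclusively to prove the Near Amalgamation Property --- given two surjections it produces a commuting (or $\e$-commuting) square, with no control whatsoever on how crooked the amalgamating maps are. The ingredient you actually need is the separate, classical fact that for every $\e>0$ there \emph{exists} an $\e$-crooked piecewise linear surjection $[0,1]\to[0,1]$, together with the fact that $\e$-crookedness of a bonding map is inherited by all further compositions in the inverse system. Once you have that, it is genericity of the \Fraisse sequence --- more precisely the cofinal absorption property of generic sequences (for every $n$, $\e$, and embedding $C([0,1])\to C([0,1])$ in the class, some later stage approximately factors through it), rather than approximate homogeneity per se --- that guarantees cofinally many $\e$-crooked bonding maps for every $\e$. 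With that substitution your argument closes; without it, the step ``the MCL supplies a crooked bonding map at the next stage'' would fail.
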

\begin{proof}[Sketch of the proof] Let $X$ be the compact metrizable space such that the \Fraisse limit of $\mathcal K_{\mathbb P}$ is isomorphic to $C(X)$. That $X$ is chainable is obvious, being an inverse limit of chainable continua. Kubi\`s's argument (\cite[Lemma 4.21 and 4.22]{Kubis.Metric}), when dualised, gives that $X$ is hereditary indecomposable (in fact, he shows that $X$ is indecomposable and every proper subcontinuum of $X$ is homeomorphic to $X$ itself). The result follows from Bing's theorem \cite{Bing1}.
\end{proof}
\subsubsection*{Other examples}
Other topological spaces can be viewed as projective \Fraisse limits of suitable classes of spaces, once certain restrictions are given on the maps in consideration. Dually, their algebras of functions could be approached by a \Fraisse theoretic point of view as limit of suitable classes of abelian \cstar-algebras. 

Examples are the Menger curve $\mathbb M$ and the Lelek fan $\mathbb L$, treated as projective \Fraisse limits of finite graphs (with connected maps) and fans, both considered as one-dimensional compact spaces, see e.g. \cite{SolPana.Menger} and \cite{KwiaBart}.
Neither $C(\mathbb M)$ nor $C(\mathbb L)$ have been formally recognised as \Fraisse limits in the context of continuous logic, and more specifically operator algebras.
\begin{problem}
Is there a class of \cstar-algebras $\mathcal K_{\mathbb M}$ where embeddings are injective $^*$-homomorphisms, having $C(\mathbb M)$ as its \Fraisse limit? What about $C(\mathbb L)$?
\end{problem}

\subsection{Nonabelian \cstar-algebras}
In this section we summarise the work of \cite{EFH:Fraisse}, \cite{Masumoto.FraisseZ}, and \cite{JacelonVignati}, which recognised certain pivotal objects in the classification of nuclear \cstar-algebras as \Fraisse limits of suitable classes. If $n\in\mathbb N$, $M_n$ denotes the \cstar-algebra of $n\times n$ complex valued matrices, $M_n(\mathbb C)$.

\subsubsection{UHF algebras}\label{ss.UHF}
A separable \cstar-algebra $A$ is 
 \emph{Uniformly HyperFinite} (UHF) if it is a direct limit of matrix algebras (see \S7 of Szab\`o's article in this volume). Thanks to Glimm's theorem, unital UHF algebras are classified by their supernatural numbers, that is, by formal products $\bar p=\prod_{p\text{ prime} }p^{\ell_p}$, where $\ell_p\in\mathbb N\cup\{\infty\}$. If $A$ is a unital UHF algebra with $A=\lim_i M_{n_i}$, for each prime $p$, let 
\[
\ell_p=\sup \{r\mid p^r\text{ divides }n_i \text{ for some } i\}.
\]
This associates a supernatural number $\bar p_A$ to $A$, and one checks that 
\[
A\cong M_{\bar p_A}:=\bigotimes_{p \text{ prime }}\bigotimes_{j\leq \ell_p}M_p.
\]
\begin{definition}
Let $\bar p$ be a supernatural number. $\mathcal K_{\bar p}$ is the class of matrix algebras whose size divides $\bar p$, where maps are unital injective $^*$-homomorphisms.
\end{definition}
The fact that any two unital $^*$-homomorphisms between matrix algebras are unitarily equivalent immediately gives (exact!) amalgamation for $\mathcal K_{\bar p}$. As the limit of a sequence of objects in $\mathcal K_{\bar p}$ is automatically UHF, we have the following.
\begin{theorem}
For each supernatural $\bar p$, the class $\mathcal K_{\bar p}$ is a \Fraisse class whose \Fraisse limit is $M_{\bar p}$.
\end{theorem}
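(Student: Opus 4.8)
The plan is to check the three axioms of a \Fraisse class for $\mathcal K_{\bar p}$ — JEP, NAP, and the WPP — and then to pin down the limit via Glimm's theorem. Throughout I use the two structural facts recorded just above the statement: a unital $^*$-homomorphism $M_a\to M_b$ exists exactly when $a\mid b$, and any two such homomorphisms are unitarily equivalent.

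First I would dispose of JEP. Given $M_a,M_b\in\mathcal K_{\bar p}$, put $c=\mathrm{lcm}(a,b)$; since $a,b\mid\bar p$ the exponent of each prime in $c$ is still bounded by that of $\bar p$, so $c\mid\bar p$ and $M_c\in\mathcal K_{\bar p}$, with unital embeddings of $M_a$ and $M_b$ into $M_c$ coming from $a,b\mid c$. For NAP (following the labels of the amalgamation diagram, so $A=M_a$ with embeddings $\phi_1\colon A\to B=M_b$ and $\phi_2\colon A\to C=M_c$), set $D=M_d$ with $d=\mathrm{lcm}(b,c)\mid\bar p$ and fix any unital embeddings $\psi_1\colon B\to D$ and $\psi_2\colon C\to D$. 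The two composites $\psi_1\circ\phi_1$ and $\psi_2\circ\phi_2$ are unital $^*$-homomorphisms $M_a\to M_d$, hence unitarily equivalent; replacing $\psi_2$ by $\Ad(u)\circ\psi_2$ for the implementing unitary $u\in M_d$ makes the two composites literally equal. Thus $\mathcal K_{\bar p}$ enjoys exact amalgamation, which is far stronger than the approximate version NAP demands.

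For the WPP, the key point is that the natural numbers dividing $\bar p$ form a countable set, so $\mathcal K_{\bar p}$ carries only countably many isomorphism types. For each $M_k\in\mathcal K_{\bar p}$ the generating $n$-tuples sit inside the separable space $M_k^n$, and since $d_n((M_k,\bar a),(M_k,\bar b))\le d(\bar a,\bar b)$, a norm-dense countable set of tuples is already $d_n$-dense; the countable union over $k$ then yields a countable $d_n$-dense subset of $\mathcal K_n$.

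It remains to identify the limit $M$. Any sequence in $\mathcal K_{\bar p}$ has a matrix-algebra inductive limit, so $M$ is UHF; write $\bar p_M$ for its supernatural number. Since every matrix block occurring divides $\bar p$, we get $\bar p_M\mid\bar p$ directly. For the reverse divisibility I would invoke universality: $M_{\bar p}$ is itself a $\mathcal K_{\bar p}$-structure, so it embeds unitally into $M$, and a unital embedding of UHF algebras forces $\bar p\mid\bar p_M$ (on $K_0$ it induces an order-unit embedding $\mathbb{Z}[1/\bar p]\hookrightarrow\mathbb{Z}[1/\bar p_M]$). Hence $\bar p_M=\bar p$, and Glimm's theorem gives $M\cong M_{\bar p}$. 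The argument carries essentially no obstacle: the only genuine input is the unitary equivalence of $^*$-homomorphisms between matrix algebras, which is available, so the whole proof reduces to bookkeeping about divisors of $\bar p$ together with one appeal to Glimm's classification.
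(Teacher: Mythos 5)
Your proposal is correct and follows the same route as the paper: exact amalgamation from the unitary equivalence of unital $^*$-homomorphisms between matrix algebras, the observation that any inductive limit of objects in $\mathcal K_{\bar p}$ is UHF, and identification of the limit via Glimm's classification by supernatural numbers. You merely write out the routine verifications (JEP via least common multiples, WPP via separability of the countably many $M_k$, and the two divisibility inequalities $\bar p_M\mid\bar p$ and $\bar p\mid\bar p_M$) that the paper leaves implicit.
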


\subsubsection{The hyperfinite II$_1$ factor $\mathcal R$}
Analyzing UHF algebras, we treated matrix algebras as \cstar-algebras; here we consider them as tracial von Neumann algebras in the appropriate language (see \cite{FHS.II}). In this setting, the distance symbol is interpreted as the distance associated with the trace norm $\norm{x}_\tau=\tau(xx^*)^{1/2}$. We recall that a tracial von Neumann algebra $(M,\tau)$ is a factor if it has trivial center. Among separable (for $\norm{\cdot}_\tau$) factors, there is a unique amenable object, known as the II$_1$ factor $\mathcal R$. In this setting, amenability coincides with hyperfiniteness, where a II$_1$ factor is 
 hyperfinite if it can be approximated, in trace-norm, by matrix algebras (see e.g. \cite{Connes.Class}). The same amalgamation one proves for matrix algebras gives the following, which was stated as \cite[Theorem 3.5]{EFH:Fraisse}.
\begin{theorem}
In the language of tracial von Neumann algebras, the hyperfinite II$_1$ factor $\mathcal R$ is the \Fraisse limit of the \Fraisse class of matrix algebras.
\end{theorem}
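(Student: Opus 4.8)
The plan is to mirror the UHF computation in the tracial setting, so I would first verify the three defining conditions of a Fraïsse class (JEP, NAP, WPP) for the class $\mathcal K$ of matrix algebras in the language of tracial von Neumann algebras, and then identify its limit with $\mathcal R$. The key structural fact I intend to exploit is the same one that drove the UHF case: any two unital $^*$-homomorphisms between matrix algebras are conjugate by a unitary. In the tracial language this is even cleaner, because matrix algebras carry a \emph{unique} trace, so every embedding $M_n\to M_m$ is automatically trace-preserving, and the trace-norm is intrinsic. This gives \emph{exact} amalgamation rather than merely near amalgamation: given $\phi_1\colon A\to C$ and $\phi_2\colon B\to C$ with $A,B,C$ matrix algebras, I take $D=C$ and, using unitary equivalence of embeddings, find $\psi_1,\psi_2$ with $\psi_1\circ\phi_1=\psi_2\circ\phi_2$ on the nose, so the $F,\e$ condition is trivially satisfied for every $\e>0$. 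JEP is immediate since $M_n$ and $M_m$ both embed unitally into $M_{nm}$, and WPP follows because, up to unitary conjugacy, there are only countably many embeddings between fixed matrix algebras, making each $(\mathcal K_n,d_n)$ separable.

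Next I would identify the limit. By the general Fraïsse theorem quoted in the excerpt, $\mathcal K$ has a unique Fraïsse limit $M$, which is a separable (for $\norm{\cdot}_\tau$) inductive limit of matrix algebras. The point is that any $\mathcal K$-structure is, by construction, a tracial von Neumann algebra that is approximated in trace-norm by matrix subalgebras; that is exactly the definition of hyperfiniteness recalled just above the statement. So $M$ is a separable hyperfinite tracial von Neumann algebra. To conclude $M\cong\mathcal R$ I must show $M$ is a II$_1$ factor, i.e.\ that it is infinite-dimensional with trivial center, and then invoke the uniqueness of the separable hyperfinite II$_1$ factor. Infinite-dimensionality is clear since the generic sequence cannot stabilize. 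For triviality of the center I would use approximate homogeneity: any central element would have to be fixed, up to small trace-norm error, by automorphisms realizing the unitary conjugations inside the approximating matrix blocks, and since matrix algebras have trivial relative commutant obstructions, a standard argument forces the center to be scalar.

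The genuinely delicate point — and the step I expect to be the main obstacle — is not the amalgamation combinatorics but matching the \emph{metric} bookkeeping of the continuous Fraïsse framework to the operator-algebraic notion of hyperfinite approximation. Concretely, the language of tracial von Neumann algebras is set up so that the distance predicate is interpreted via $\norm{x}_\tau=\tau(xx^*)^{1/2}$, and one must check that ``inductive limit of matrix algebras in the sense of $\mathcal L$-structures'' really coincides with ``trace-norm closure of an increasing union of matrix subalgebras'' — in particular that the weak (von Neumann) closure is correctly recovered from the metric completion, since $\mathcal R$ is a von Neumann algebra and not merely a metric $^*$-algebra. The honest way to handle this is to cite the setup of \cite{FHS.II}, where the completion of such a metric structure is shown to be a von Neumann algebra, so that the metric Fraïsse limit is literally $\mathcal R$ and not some dense $^*$-subalgebra of it. Once this translation is in place, uniqueness of the amenable separable II$_1$ factor (equivalently, of the hyperfinite one) closes the argument, exactly as in \cite[Theorem 3.5]{EFH:Fraisse}.
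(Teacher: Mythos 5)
Your proposal is correct and follows essentially the same route as the paper, which likewise derives exact amalgamation from the unitary equivalence of unital $^*$-homomorphisms between matrix algebras and then identifies the limit with $\mathcal R$ via trace-norm hyperfiniteness and the uniqueness of the separable amenable II$_1$ factor, citing \cite{FHS.II} for the metric axiomatization. The extra care you take with the metric completion versus the von Neumann closure, and with factoriality of the limit, only fills in details the survey leaves implicit.
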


\subsection{Finite-dimensional \cstar-algebras}
We have seen (\S\ref{ss.UHF}) that full matrix algebras have amalgamation, in the category of unital \cstar-algebras. One might wonder whether the same holds for finite-dimensional \cstar-algebras. If $F$ is a \cstar-algebra which is finite-dimensional as a vector space, then there are positive naturals $n_1,\ldots,n_k$ such that $F\cong\bigoplus_{i\leq k}M_{n_i}$. Limits of finite-dimensional \cstar-algebras are called approximately finite-dimensional (AF) algebras; these algebras were of great interest in the 70's and 80's, representing some of the first nontrivial examples of infinite-dimensional simple \cstar-algebras. They were classified by Elliott in \cite{Elliott.intertwining}.

A first guess in trying to construct a universal and somewhat homogeneous AF algebra would be to consider the class of all finite-dimensional \cstar-algebras. Unfortunately, this class does not have amalgamation: Let $A=\mathbb C\oplus \mathbb C$ and $B=M_3$, and consider the maps $\varphi_1\colon A\to B$ and $\varphi_2\colon A\to B$ given by
\[
\varphi_1((a,b))=\begin{pmatrix}
a & 0 & 0\\
0 & a & 0\\
0&0&b
\end{pmatrix}\text{ and }\varphi_2((a,b))=\begin{pmatrix}
a & 0 & 0\\
0 & b & 0\\
0&0&b
\end{pmatrix}.
\]
A trace argument shows that one cannot amalgamate these two maps: if $\psi_1$ and $\psi_2$ are two unital $^*$-homomorphisms whose range is a matrix algebra $M_k$, then, with $\tau$ the canonical trace on $M_k$, one verifies that 
\[
\frac{2}{3}=\tau(\psi_1\circ\varphi_1((1,0)))\neq \tau(\psi_2\circ\varphi_2((1,0)))=\frac{1}{3}.
\]
The obstructions to amalgamation above are of tracial nature. By adjoining the trace to the language, and only considering trace preserving $^*$-homomorphisms (see \S\ref{ss.blocks}), in \cite[3.8 and 3.9]{EFH:Fraisse} it was shown that certain subclasses of finite-dimensional \cstar-algebras are indeed \Fraisse classes. All of the \Fraisse limits arising this way are simple AF algebras.

On the other side of the spectrum are AF algebras which are surjectively universal, and therefore must contain an abundance of ideals. These were treated, by using a different approach not relying on tracial information, by Ghasemi and Kubi\`s in \cite{GhasemiKubis}. A definition:
\begin{definition}
Let $A$ and $B$ be \cstar-algebras. A map $\varphi\colon A\to B$ is \emph{left-invertible} if there is a $^*$-homomorphism $\pi\colon B\to A$ such that $\pi\circ\varphi$ is the identity on $A$. 
\end{definition}

The following is the main content of \cite{GhasemiKubis}:
\begin{theorem}
Let $\mathcal C$ be a class of finite-dimensional which is closed under taking direct sums and ideals, whose maps are left-invertible embeddings (not necessarily unital). Then $\mathcal C$ is a \Fraisse class.
\end{theorem}
By considering $\mathcal C$ be the class of all finite-dimensional \cstar-algebras, the \Fraisse limit of such class $A_{\mathcal C}$ is a separable AF algebra which surjects onto any separable AF algebra. Further, its trace space is affinely homeomorphic to the Poulsen simplex $\mathbf P$ (see \S\ref{ss.Poulsen}).

\subsubsection{Some classes of small algebras}\label{ss.blocks}
Let us introduce certain classes of \cstar-algebras we will use as building blocks. 

Some notations: $0_k$ denotes the $0$ matrix in $\mathrm{M}_k$, likewise $1_k$ is the identity. If $a\in M_k$ and $b\in M_{k'}$ are matrices, let $\diag(a,b):=\begin{pmatrix}
a &0 \\
0 &b
\end{pmatrix}\in M_{k+k'}$. This naturally generalises to $\diag(a_1,\ldots,a_n)$.

\begin{definition}
 If $n\in\mathbb N$, let 
\[
C_n:=C([0,1],M_n).
\]
We will refer to these as 
 \emph{interval algebras}.

If $p$ and $q$ are natural numbers, by identifying $M_{pq}$ with $M_p\otimes M_q$, we define
\[
\mathcal Z_{p,q}:=\{f\in C_{pq}\mid f(0)\in 1_p\otimes M_q\text{ and } f(1)\in M_p\otimes 1_q\}.
\]
We refer to these as 
 \emph{dimension drop algebras}.
\end{definition}
 Notice that $\mathcal Z_{p,q}$ is projectionless if and only if $p$ and $q$ are coprime (see \cite[Lemma 2.2]{JiangSu}). Coprime dimension drop algebras have $K$-theory $(\mathbb Z,0)$, where $1$ is the class of the identity\footnote{We do not define $K$-theory in this article. For the basics of $K$-theory, please see \cite{Rordam.Ktheory}.}.

Another interesting class of subalgebras of interval algebras was identified in \cite{Razak}. 
\begin{definition}
If $n,k\in\NN$, let
\begin{eqnarray*}
A_{n,k}:=\{f\in C([0,1],M_{nk})\mid \exists a\in M_k(f(0)=&\diag(\underbrace{a,\ldots,a}_n)\text{ and }\\ f(1)=&\diag(\underbrace{a,\ldots,a}_{n-1},0_k))\}.
\end{eqnarray*}
The algebras $A_{n,k}$ are called 
 \emph{Razak blocks}.

Let
\begin{eqnarray*}
B_{n,k}:=\{f\in C([0,1],M_{2nk})\mid \exists a,b\in M_k (f(0)=&\diag(\underbrace{a,\ldots,a}_n, \underbrace{b,\ldots,b}_n)\text{ and }\\ f(1)=&\diag(\underbrace{a,\ldots,a}_{n-1},0_k, \underbrace{b,\ldots,b}_{n-1},0_k))\}.
\end{eqnarray*}
The algebras $B_{n,k}$ are called 
 \emph{generalised Razak blocks}.
\end{definition}
The following summarises a few of the basic properties of these objects (see \cite[Proposition 2.8]{JacelonVignati}). Recall that a \cstar-algebra $A$ is 
 projectionless if it has no proper projections. $A$ is stably projectionless if $A\otimes\mathcal K$ is projectionless.

 \begin{proposition}\label{prop:Ktheory}
Let $n,k\in\NN$. Then
\begin{enumerate}
\item\label{noprojc1} Razak blocks and generalised Razak blocks are stably projectionless, but every proper quotient of each of them has a nonzero projection;
\item\label{noprojc2} nonzero $^*$-homomorphisms whose domain and codomain are a Razak block or a generalised Razak block are injective;
\item\label{noprojc3} $K_*(A_{n,k})=0$, $K_0(B_{n,k})\cong\zet$ and $K_1(B_{n,k})=0$.
\end{enumerate}
\end{proposition}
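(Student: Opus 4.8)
The plan is to prove \eqref{noprojc1}, deduce \eqref{noprojc2} from it, and compute the $K$-theory in \eqref{noprojc3} with a six-term exact sequence; I treat $A_{n,k}$ in detail, since $B_{n,k}$ is handled by running the same arguments on each of its two strands $a,b$. For stable projectionlessness, the clean point is that, writing $M_{nk}=M_n\otimes M_k$ and reordering tensor legs, one has $M_m(A_{n,k})\cong A_{n,mk}$; so it suffices to show each $A_{n,k'}$ is projectionless. A projection in $A_{n,k'}$ is a norm-continuous projection-valued function on the interval, hence of constant rank $r$, while the two boundary conditions force $r=n\cdot\rank(a)$ and $r=(n-1)\cdot\rank(a)$ for a single $a$, whence $\rank(a)=0$ and the projection vanishes.

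For the second half of \eqref{noprojc1}, let $I\neq0$ be a proper ideal and $Q=A_{n,k}/I$. Since the interior evaluations $\pi_t$ ($t\in(0,1)$) are jointly faithful, $I\neq0$ forces $\pi_{t_0}(I)=M_{nk}$ for some interior $t_0$; choosing $u\in I$ with $u(t_0)$ invertible and writing $g=(gu^{-1})u$ shows $I\supseteq C_0((t_0-\delta,t_0+\delta),M_{nk})$ for a small $\delta$ with $[t_0-\delta,t_0+\delta]\subseteq(0,1)$. I then fix a rank-one projection $e\in M_k$ and build a self-adjoint $p\in A_{n,k}$ equal to $\diag(e,\ldots,e)$ on $[0,t_0-\delta]$, equal to $\diag(e,\ldots,e,0_k)$ on $[t_0+\delta,1]$, and interpolated across the middle. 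Then $p^2-p$ is supported inside $(t_0-\delta,t_0+\delta)$, so it lies in $I$ and the image of $p$ in $Q$ is a projection; since every surviving representation sends $p$ to a matrix of rank $n$ or $n-1$ (or to $e$ on the boundary), this projection is nonzero, at least when $n\geq2$. (For $n=1$ one has $A_{1,k}\cong$ the cone over $M_k$, and the assertion should be read with $n\geq2$, as is standard for Razak blocks.)

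Assertion \eqref{noprojc2} is then formal: a nonzero $^*$-homomorphism $\phi$ out of such an algebra factors as an injective $^*$-homomorphism $\overline\phi$ on $A_{n,k}/\ker\phi$, and if $\ker\phi$ were a nonzero proper ideal the quotient would carry a nonzero projection by \eqref{noprojc1}, which $\overline\phi$ would send to a nonzero projection inside the (projectionless) codomain -- impossible. For \eqref{noprojc3}, I use the ideal $I_0=\{f : f(0)=f(1)=0\}=C_0((0,1),M_{nk})$, the suspension of $M_{nk}$, with $K_0(I_0)=0$ and $K_1(I_0)\cong\zet$. Because a single $a$ controls both endpoints, the quotient is $A_{n,k}/I_0\cong M_k$, and the six-term sequence collapses to $0\to K_0(A_{n,k})\to\zet\xrightarrow{\partial}\zet\to K_1(A_{n,k})\to0$. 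Everything hinges on the exponential map $\partial$: lifting $e$ to a self-adjoint $h$ with $h(0)=\diag(e,\ldots,e)$ and $h(1)=\diag(e,\ldots,e,0_k)$, the class $\partial[e]=[\exp(2\pi i h)]$ is the winding number of $t\mapsto\exp(2\pi i\tr h(t))$, namely $\tr h(1)-\tr h(0)=-1$. Hence $\partial$ is an isomorphism and $K_*(A_{n,k})=0$. Running this on the two strands of $B_{n,k}$ gives quotient $M_k\oplus M_k$ and $\partial\colon\zet^2\to\zet$, $(x,y)\mapsto-(x+y)$, which is onto with kernel $\cong\zet$, so $K_0(B_{n,k})\cong\zet$ and $K_1(B_{n,k})=0$.

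The main obstacle is the middle step: controlling the ideal structure well enough to guarantee that a nonzero ideal swallows an entire subinterval of fibres, so that the hand-built projection genuinely survives in $Q$. This is complicated by the fact that the two endpoints share the same parameter $a$ and therefore define the \emph{same} irreducible representation, so the primitive spectrum is circle-like rather than an interval and one must be careful not to ``remove one endpoint without the other.'' By contrast the $K$-theory is routine once $\partial$ is identified; its only real content is the trace/winding bookkeeping delivering the $-1$.
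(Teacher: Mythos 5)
Your argument is correct and follows what is essentially the standard (and, as far as one can tell, the intended) route --- the paper itself gives no proof of this proposition, deferring to \cite{JacelonVignati}: rank-counting of a projection at the two endpoints combined with $M_m(A_{n,k})\cong A_{n,mk}$ for stable projectionlessness, a hand-built almost-projection that survives in any quotient by a nonzero proper ideal, the formal deduction of~(2) from~(1), and the six-term sequence over the suspension ideal $C_0((0,1),M_{nk})$ with the exponential map identified via the trace drop $(n-1)-n=-1$. The only friction is the one you already flag: for $n=1$ the blocks degenerate to cones, for which (1) and (2) fail as literally stated, so the proposition must indeed be read with $n\geq 2$.
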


By Proposition~\ref{prop:Ktheory}, any limit of Razak blocks has trivial $K$-theory. On the other hand, limits of generalised Razak blocks can have different $K_0$-groups. Since $K_0(B_{n,k})\cong\mathbb Z$, a $^*$-homomorphism between generalised Razak blocks $\varphi$ induces a group endomorphism $\varphi_*:\mathbb Z\to \mathbb Z$. 
\begin{definition}
Let $\varphi$ be a $^*$-homomorphism between generalised Razak blocks, and let $k\in\mathbb Z$. We say that $\varphi$ has \emph{$K$-theory $k$} if $\varphi_*(1)=k$.
\end{definition}

Before giving the definition of the classes of interest, let us look at traces on interval algebras, dimension drop algebras, and (generalised) Razak blocks. 
\begin{definition}
If $A$ is a \cstar-algebra, a state $\tau$ on $A$ is a 
 \emph{trace} if $\tau(ab)=\tau(ba)$ for all $a,b\in A$. We denote by $T(A)$ the trace space of $A$. If $A$ and $B$ are \cstar-algebras, $\sigma\in T(A)$ and $\tau\in T(B)$, we say that a $^*$-homomorphism $\varphi\colon A\to B$ sends $\sigma$ to $\tau$, and write $\varphi\colon (A,\sigma)\to (B,\tau)$, if $\sigma(a)=\tau(\varphi(a))$ for all $a\in A$.
\end{definition}

As $M_n$ has a unique trace $\tau_n$, traces on $C_n$ correspond to Radon probability measures on $[0,1]$: if $\mu$ is a measure on $[0,1]$, let 
\[
\tau_\mu(f)=\int\tau_n(f(t))d\mu(t).
\]
$\tau_\mu$ is a trace on $C_n$. Conversely, given a trace $\tau$ on $C_n$, we can construct a measure $\mu_\tau$ by 
\[
\mu_\tau(O)=\sup_{f:\supp(f)\subseteq O, \norm{f}\leq 1}\tau(ff^*),
\]
for any open set $O\subseteq[0,1]$. It is routine to check that $\tau=\mu_{\tau_\mu}$ and $\mu=\mu_{\tau_\mu}$. With this association, we can borrow measure theoretic terminology and refer to faithful diffuse traces for those whose associated measure is faithful and diffuse. (Recall that a measure is diffuse if every measurable set of nonzero measure can be split in two measurable subsets each of nonzero measure. A measure is faithful if no nonempty open set has zero measure.) We denote by $T_{fd}(C_n)$ the set of faithful diffuse traces on $C_n$. 

The same reasoning applies to dimension drop algebras and (generalised) Razak's blocks; we will write $T_{fd}(\mathcal Z_{p,q}), T_{fd}(A_{n,k})$ and $T_{fd}(B_{n,k})$ for the corresponding sets of 
 faithful diffuse traces.

\begin{definition}
Let $A$ and $B$ be either interval algebras, dimension drop algebras, Razak blocks, or generalised Razak blocks. Let $\varphi\colon A\to B$ be a $^*$-homomorphism. We say that $\varphi$ \emph{pulls back faithful diffuse traces} if there are $\sigma\in T_{fd}(A)$ and $\tau\in T_{fd}(B)$ such that $\varphi\colon (A,\sigma)\to (B,\tau)$.
\end{definition}

Notice that $^*$-homomorphisms pulling back faithful diffuse traces are automatically injective. Furthermore, the choice of $\tau$ in the definition above is not relevant: in the above setting, if $\varphi$ pulls back a faithful diffuse trace to a faithful diffuse trace then the pullback of any faithful diffuse trace is such. Before proceeding, we record the following, which shows that all diffuse faithful traces are the same up to isomorphism.

\begin{lemma}\label{lemma:movingtraces}
Let $n\in\NN$ and $\sigma,\tau\in T_{fd}(C_n)$. Then there is an isomorphism $(C_n,\sigma)\to(C_n,\tau)$. The same applies to dimension drop algebras, and (generalised) Razak blocks.
\end{lemma}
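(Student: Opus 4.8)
The plan is to reduce everything to the classical fact that any two atomless Borel probability measures of full support on $[0,1]$ are intertwined by an increasing self-homeomorphism of $[0,1]$, and then to transport the traces by the $^*$-automorphism such a homeomorphism induces. I would first settle the case $A=C_n$ and then observe that the same argument adapts verbatim to the subalgebras, once one keeps track of the behaviour at the endpoints. (The statement is understood in the natural way: for \emph{any} two faithful diffuse traces $\sigma,\tau$ there is a trace-preserving isomorphism, which in particular covers the comparison of $\sigma$ against a fixed reference trace.)

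First I would invoke the trace-measure correspondence recalled just above the statement. Writing $\sigma=\tau_\mu$ and $\tau=\tau_\nu$, the hypothesis that both are faithful and diffuse says precisely that $\mu$ and $\nu$ are atomless Borel probability measures on $[0,1]$ with $\supp\mu=\supp\nu=[0,1]$. I would then form the cumulative distribution functions $F_\mu(t)=\mu([0,t])$ and $F_\nu(t)=\nu([0,t])$. Atomlessness makes these continuous (and forces $F_\mu(0)=\mu(\{0\})=0$), full support makes them strictly increasing, and being probability measures gives $F_\mu(1)=1$; hence each of $F_\mu,F_\nu$ is an increasing homeomorphism of $[0,1]$ fixing both endpoints. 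Setting $h:=F_\mu^{-1}\circ F_\nu$ produces an increasing homeomorphism of $[0,1]$ with $h(0)=0$, $h(1)=1$, and $h_*\nu=\mu$; this last identity is the standard probability-integral-transform computation, using that $(F_\mu)_*\mu$ and $(F_\nu)_*\nu$ are both Lebesgue measure.

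Next I would define $\varphi:=\alpha_h$, the $^*$-automorphism of $C_n$ given by $\alpha_h(f)=f\circ h$, with inverse $\alpha_{h^{-1}}$. A change of variables then yields
\[
\tau\bigl(\varphi(f)\bigr)=\int \tau_n\bigl(f(h(t))\bigr)\,d\nu(t)=\int \tau_n\bigl(f(s)\bigr)\,d(h_*\nu)(s)=\int \tau_n\bigl(f(s)\bigr)\,d\mu(s)=\sigma(f),
\]
so that $\varphi\colon(C_n,\sigma)\to(C_n,\tau)$ sends $\sigma$ to $\tau$ in the sense of the convention fixed earlier, settling the interval-algebra case.

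For $\mathcal Z_{p,q}$, $A_{n,k}$, and $B_{n,k}$ I would rerun the identical argument, the only new point being that these are subalgebras of some $C([0,1],M_N)$ cut out by constraints on $f(0)$ and $f(1)$. Their traces are again of the form $f\mapsto\int\tau_N(f(t))\,d\mu(t)$ (the boundary fibres are full matrix algebras up to an identity factor, on which $\tau_N$ restricts to the appropriate normalized trace, and in any case those two points carry $\mu$-measure zero once $\mu$ is diffuse), and faithful diffuse traces again correspond to full-support atomless measures. Because the homeomorphism $h$ constructed above fixes both endpoints, $\alpha_h$ carries $f(0)$ to $f(0)$ and $f(1)$ to $f(1)$, hence preserves the defining boundary conditions and restricts to a genuine $^*$-automorphism of the subalgebra, after which the same trace computation applies. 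The only genuinely substantive ingredient is the homeomorphism-of-measures fact, handled cleanly by the cumulative distribution functions; the step demanding the most care is the book-keeping for the subalgebras, namely checking that the trace-measure correspondence persists and that \emph{fixing the endpoints} is exactly the condition keeping $\alpha_h$ inside the subalgebra—everything else being a routine change of variables.
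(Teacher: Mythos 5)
Your proof is correct and is essentially the paper's own argument: the paper's one-line proof defines $\varphi(f)(t)=f(s)$ with $\mu_\sigma([0,s])=\mu_\tau([0,t])$, which is precisely your $\alpha_h$ with $h=F_\mu^{-1}\circ F_\nu$. You have simply filled in the details the paper leaves implicit (continuity and strict monotonicity of the distribution functions, the change-of-variables computation, and the endpoint-fixing needed to preserve the boundary conditions of the dimension drop and Razak-type subalgebras).
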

\begin{proof}
For $t\in [0,1]$, define $\varphi(f)(t)=f(s)$, where $s\in [0,1]$ is the unique point such that $\mu_\sigma([0,s])=\mu_\tau([0,t])$. This is the required isomorphism.
\end{proof}

\begin{definition}
The language of tracial \cstar-algebras $\mathcal L_{\mathrm{C}^*,\tau}$ is the language of \cstar-algebras expanded by a unary predicate $\tau$ whose modulus of uniform continuity is the identity. 
\end{definition}
We are ready to define our classes. 
\begin{definition}\label{def:classes}
All classes are considered in the langage of tracial \cstar-algebras, that is, structures are pairs $(A,\tau)$. Let $\bar p$ be a supernatural number. Then
\begin{itemize}
\item $\mathcal K_{[0,1],\bar p}$ is the class whose objects have the form $(C_n,\tau)$, where $n$ divides $\bar p$ and $\tau\in T_{fd}(C_n)$. Embeddings are $^*$-homomorphisms which pull back faithful diffuse traces;
\item $\mathcal K_{\mathcal Z}$ is the class whose objects are pairs $(\mathcal Z_{p,q},\tau)$ where $p$ and $q$ are coprime and $\tau\in T_{fd}(\mathcal Z_{p,q})$. Embeddings are $^*$-homomorphisms which pull back faithful diffuse traces;
\item $\mathcal K_{\mathcal W}$ is the class whose objects are pairs $(A_n,\tau)$ where $A_n$ is a Razak block and $\tau\in T_{fd}(A_n)$. Embeddings are $^*$-homomorphisms which pull back faithful diffuse traces;
\item $\mathcal K_{\mathcal Z_0}$ is the class whose objects are pairs $(B_n,\tau)$ where $B_n$ is a generalised Razak block and $\tau\in T_{fd}(B_n)$. Embeddings are $^*$-homomorphisms which pull back faithful diffuse traces;
\item $\mathcal K_{r,1}$ is the subclass of $\mathcal K_{\mathcal Z_0}$ where only embeddings whose $K$-theory belongs to $\{-1,1\}$ are considered;
\item $\mathcal K_{r,\bar p}$ is the subclass of $\mathcal K_{\mathcal Z_0}$ where only embeddings whose $K$-theory divides $\bar p$ are considered;
\end{itemize}

\end{definition}
\begin{theorem}\label{thm:CstarFraisse}
In the language of tracial \cstar-algebras, each of the classes in Definition~\ref{def:classes} is a \Fraisse class. Moreover
\begin{itemize}
\item The \Fraisse limit of $\mathcal K_{[0,1],\bar p}$ is $M_{\bar p}$, the UHF algebra associated to the supernatural number $\bar p$;
\item The \Fraisse limit of $\mathcal K_\mathcal Z$ is the 
 Jiang-Su algebra $\mathcal Z$;
\item The \Fraisse limit of $\mathcal K_\mathcal W$ is the 
 Jacelon algebra $\mathcal W$, which is also the \Fraisse limit of the class $\mathcal K_{r,1}$;
\item The \Fraisse limit of $\mathcal K_{\mathcal Z_0}$ is the algebra 
 $\mathcal Z_0$;
\item The \Fraisse limit of $\mathcal K_{r,\bar p}$ is the algebra $\mathcal Z_0\otimes M_{\bar p}$, where $M_{\bar p}$ is the UHF algebra associated to the supernatural number $\bar p$.
\end{itemize}

\end{theorem}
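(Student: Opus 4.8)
The plan is to verify the three \Fraisse axioms (JEP, NAP, and the WPP) uniformly across all the classes, isolating the single genuinely hard input, and then to identify each limit by its invariant. The WPP is the mildest point: each building block is a separable \cstar-algebra, there are only countably many isomorphism types (indexed by the matrix sizes dividing $\bar p$, together with the admissible $K$-theory), and by Lemma~\ref{lemma:movingtraces} all faithful diffuse traces on a fixed block are isomorphic; hence, up to isomorphism, each $\mathcal K_n$ carries only countably many objects, each with a separable space of generating $n$-tuples, so $\mathcal K_n$ is $d_n$-separable. JEP reduces to the existence of a common block admitting trace-preserving embeddings of any two given blocks, which is a special case of the amalgamation argument below.

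The crux is NAP. Given blocks $A,B,C$ with trace-preserving embeddings $\phi_1\colon A\to B$ and $\phi_2\colon A\to C$, a finite $F\subseteq A$, and $\e>0$, I would first invoke an \emph{existence} statement to produce a single block $D$ in the class together with trace-preserving embeddings $\psi_1\colon B\to D$ and $\psi_2\colon C\to D$, taking the matrix size of $D$ large and, in the cases $\mathcal K_{r,1}$ and $\mathcal K_{r,\bar p}$, choosing the $K$-theory of $\psi_1,\psi_2$ inside the admissible set so that $\psi_1\phi_1$ and $\psi_2\phi_2$ agree on $K_0$. Because the $\phi_i$ and $\psi_i$ preserve traces, both composites $\psi_1\phi_1$ and $\psi_2\phi_2$ are trace-preserving maps $(A,\sigma_A)\to(D,\tau_D)$ pulling $\tau_D$ back to the \emph{same} trace $\sigma_A$ on $A$, hence inducing the same trace data (and, by construction, the same $K$-theory). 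A \emph{uniqueness} statement — that two such homomorphisms between blocks are approximately unitarily equivalent — then furnishes a unitary $u\in D$ with $\norm{u\,\psi_1\phi_1(a)\,u^*-\psi_2\phi_2(a)}<\e$ for all $a\in F$. Replacing $\psi_1$ by $\Ad(u)\circ\psi_1$, which remains trace-preserving and in the class, yields NAP.

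To identify the limits, note that each \Fraisse limit $M$ is an inductive limit of the blocks along trace-preserving, $K$-theory-admissible connecting maps; faithfulness and diffuseness of the traces force these maps to be spread out enough that $M$ is simple, nuclear, and monotracial. Its invariant can then be read off from the defining class: the interval algebras of $\mathcal K_{[0,1],\bar p}$ yield the invariant of the UHF algebra $M_{\bar p}$, and the projectionless dimension drop algebras of $\mathcal K_{\mathcal Z}$ yield that of $\mathcal Z$, so that the classification of simple separable nuclear \cstar-algebras identifies $M$ in these cases. For the (generalised) Razak blocks one tracks the admissible $K$-theory through Proposition~\ref{prop:Ktheory} and invokes the classification of (stably) projectionless algebras to obtain $\mathcal W$, $\mathcal Z_0$, and $\mathcal Z_0\otimes M_{\bar p}$ for $\mathcal K_{\mathcal W}$, $\mathcal K_{\mathcal Z_0}$, and $\mathcal K_{r,\bar p}$; here the subtlest point is to check that the restricted class $\mathcal K_{r,1}$ still produces $\mathcal W$, coinciding with $\mathcal K_{\mathcal W}$. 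Alternatively, one may exhibit the known inductive-limit presentations of these algebras — Jiang and Su's for $\mathcal Z$, Razak's and Jacelon's for $\mathcal W$ — and verify directly that those sequences are generic, so that their limits agree with the \Fraisse limits.

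The main obstacle is the uniqueness statement used in the NAP step: that trace-preserving $^*$-homomorphisms between interval algebras, dimension drop algebras, or (generalised) Razak blocks that agree on $K$-theory are approximately unitarily equivalent. This is precisely the uniqueness half of the Elliott-type classification of homomorphisms out of these building blocks, and it carries the analytic weight of the whole argument — controlling eigenvalue patterns along $[0,1]$ and the behaviour at the endpoints $0$ and $1$, where the dimension-drop and Razak conditions are imposed. By comparison, the existence statement and the final identification by classification are routine once uniqueness is in hand.
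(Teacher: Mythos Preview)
Your overall architecture---verify WPP and JEP easily, put the weight on NAP via an existence/uniqueness scheme, then identify the limit by classification---matches the paper. But the NAP step, as you have written it, has a genuine gap: the uniqueness statement you invoke is false.

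You assert that two trace-preserving $^*$-homomorphisms between blocks which agree on $K$-theory are approximately unitarily equivalent. Take $\phi_1=\id$ and $\phi_2(f)(t)=f(1-t)$ on $C([0,1])$; both pull back $\tau_\lambda$ to $\tau_\lambda$ and there is no $K$-theoretic obstruction, yet since $C([0,1])$ is abelian, unitary conjugation is trivial and $\|\phi_1(\iota)-\phi_2(\iota)\|=1$ for $\iota(t)=t$. More generally, maps between these one-dimensional blocks carry strictly more invariant data than $(K_*,\tau)$: the eigenvalue pattern $t\mapsto\{\xi_i(t)\}$ along $[0,1]$ is itself an obstruction to approximate unitary equivalence. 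You allude to ``eigenvalue patterns'' at the end, but you never explain why your composites $\psi_1\phi_1$ and $\psi_2\phi_2$ should agree on them; picking $D$ of large matrix size does nothing by itself.

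The paper's route repairs exactly this point. One first approximates arbitrary morphisms by \emph{diagonal} ones, and then, crucially, composes with a further morphism $\psi$ of small \emph{diameter} (so each $\xi_i$ has small oscillation). Proposition~\ref{prop:key} is the restricted uniqueness statement: two diagonal trace-preserving maps of sufficiently small diameter \emph{are} approximately unitarily equivalent. NAP is then obtained by choosing $\psi$ so small that both $\psi\circ\varphi_1$ and $\psi\circ\varphi_2$ fall under Proposition~\ref{prop:key}. In other words, the existence step is not just ``find some $D$ and some $\psi_i$'': it must produce maps that erase the eigenvalue-pattern obstruction, and this is what ``small diameter'' accomplishes. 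A second subtlety you pass over is that for dimension drop and (generalised) Razak blocks the conjugating unitary is a priori found only in the ambient interval algebra, and one must work to ensure $u\varphi_1[A]u^*$ stays inside the block; the paper flags this as the main technical difficulty beyond the interval case.

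Your identification of the limits, including the alternative of exhibiting the Jiang--Su and Jacelon sequences as generic, is in line with the paper.
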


Theorem~\ref{thm:CstarFraisse} is a result of the work of many hands. The class $\mathcal K_\mathcal Z$ was treated in \cite{EFH:Fraisse}. The proof in \cite{EFH:Fraisse} relies on the classification of morphisms from $1$-dimensional NCCW complexes into stable rank one \cstar-algebras (and in particular, into $\mathcal Z$) of Robert's (\cite{Robert:2010qy}), which served to proved the NAP. A by hand proof of the NAP for dimension drop algebras, not relying on any classification result, was written in \cite{Masumoto.FraisseZ} (see also \cite{Masumoto.Real}). The classes $\mathcal K_{[0,1],\bar p}$ were treated in \cite[\S3]{Masumoto.FraisseZ}. The other classes, e.g., those giving rise to nonunital \cstar-algebras, were treated in \cite{JacelonVignati}.

The proofs (except the one from \cite{EFH:Fraisse}) follow a similar pattern, which we will now sketch in the case of the classes $\mathcal K_{[0,1],\bar p}$. While the proofs for interval algebras, dimension drop algebras, and Razak blocks carry a similar amount of technical difficulties, this is not the case for generalised Razak blocks, where the proof is quite intricate. They main difficulty is always in proving the Near Amalgamation Property. 

\subsubsection{The proof}
This section is dedicated to the proof of Theorem~\ref{thm:CstarFraisse} for the classes $\mathcal K_{[0,1],\bar p}$. We will not prove the individual lemmas leading to the proof of the theorem; full proofs can be found in \cite{Masumoto.FraisseZ}.

For simplicity, we assume $\bar p$ is the largest supernatural number, i.e., $\bar p=\prod_{p\text{ prime}}p^\infty$. We write $\mathcal K$ for $\mathcal K_{[0,1],\bar p}$.

Recall that $C_n=C([0,1],M_n)$, so objects in $\mathcal K$ are pairs $(C_n,\sigma)$, with $\sigma\in T_{fd}(C_n)$, and morphisms are $^*$-homomorphisms which pull back faithful diffuse traces. As already mentioned, all such maps are injective. It is an easy exercise to show that this class has the WPP. Another easy exercise, using Lemma~\ref{lemma:movingtraces}, gives that $\mathcal K$ has the JEP. We are left to show that $\mathcal K$ has the NAP and to analyse its \Fraisse limit.

\begin{definition}
Let $n$ and $m$ be natural numbers, where $n$ divides $m$. Let $j=\frac{m}{n}$. A $^*$-homomorphism $\varphi\colon C_n\to C_m$ is \emph{diagonal} if there are continuous maps $\xi_1,\ldots,\xi_j\colon [0,1]\to[0,1]$ with $\xi_1\leq\cdots\leq\xi_j$ and a unitary $u\in C_m$ such that for every $f\in C_n$ and $t\in [0,1]$ we have 
\[
\varphi(f)(t)=u(t)\diag(f(\xi_1(t)),\ldots,f(\xi_j(t)))u^*(t).
\]
The maps $\xi_1,\ldots,\xi_j$ are said to be \emph{associated} to $\varphi$
\end{definition}
First, we show that we can reduce to the case of diagonal maps.
\begin{lemma}
Let $n,m\in\NN$ and suppose that $n$ divides $m$. Let $F\subseteq C_n$ be finite and $\e>0$. Suppose that $\varphi\colon C_n\to C_m$ is a unital $^*$-homomorphism. Then there is a diagonal $\psi\colon C_n\to C_m$ such that
\[
\norm{\varphi(a)-\psi(a)}<\e, \text{ for all } a\in F.
\]
Moreover, if $\sigma\in T(C_n)$ and $\tau\in T(C_m)$ are such that $\varphi$ pulls $\tau$ back to $\sigma$, then so does $\psi$.
\end{lemma}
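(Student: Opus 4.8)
Write $j=m/n$ and regard elements of $C_n$ as continuous functions $[0,1]\to M_n$, with $\iota(s)=s$. The plan is to analyse $\vp$ fibrewise, extract continuous eigenvalue functions, fix those as the data of $\psi$, and only afterwards produce the conjugating unitary by a subdivision-and-gluing argument; the trace clause will then come for free. For fixed $t$, the map $\mathrm{ev}_t\circ\vp\colon C_n\to M_m$ is a unital $^*$-homomorphism, and since every irreducible representation of $C_n=C([0,1],M_n)$ is an evaluation $\mathrm{ev}_s$ ($s\in[0,1]$) of dimension $n$, decomposing $\C^m$ into irreducibles produces points $s_1(t),\dots,s_j(t)\in[0,1]$ and a unitary $U_t\in M_m$ with $\mathrm{ev}_t\circ\vp(f)=U_t\diag(f(s_1(t)),\dots,f(s_j(t)))U_t^*$, where unitality forces exactly $j=m/n$ summands. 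Let $\xi_1(t)\le\cdots\le\xi_j(t)$ be the increasing rearrangement of $\{s_i(t)\}$. These are continuous, because $t\mapsto\vp(\iota\cdot 1_n)(t)$ is a norm-continuous path of self-adjoint matrices whose ordered eigenvalues are exactly the $\xi_i(t)$ (each repeated $n$ times), and ordered eigenvalues of a continuous self-adjoint family depend continuously on the parameter. I then \emph{fix} $\psi$ to be the diagonal map with associated maps these $\xi_i$, leaving only the unitary $u\in C_m$ to be chosen.

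With this choice the ``moreover'' clause is automatic. For any trace $\tau$ on $C_m$ with associated measure $\mu_\tau$ and any $a\in C_n$, unitary invariance of $\tau_m$ gives
\[
\tau(\psi(a))=\int_{[0,1]}\tau_m\big(\diag(a(\xi_1(t)),\dots,a(\xi_j(t)))\big)\,d\mu_\tau(t)=\int_{[0,1]}\frac1j\sum_{i=1}^{j}\tau_n(a(\xi_i(t)))\,d\mu_\tau(t),
\]
and the identical fibrewise computation applied to $\vp$ yields the same value. Hence $\tau(\psi(a))=\tau(\vp(a))$ for every $a$, so if $\vp\colon(C_n,\sigma)\to(C_m,\tau)$ then also $\psi\colon(C_n,\sigma)\to(C_m,\tau)$. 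All remaining work is to build a continuous $u$ with $\vp(a)(t)\approx u(t)\diag(a(\xi_i(t)))u(t)^*$ on $F$.

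The fibrewise diagonalisers $U_t$ need not vary continuously, and genuinely fail to at parameters where some $\xi_i$ coincide, where the eigenvector attached to ``the $i$-th eigenvalue'' jumps; since only $\e$-approximation on $F$ is required I would sidestep this by a piecewise-constant construction followed by connectivity gluing. Enlarging $F$ if necessary, assume it contains a finite generating set of $C_n$ including $\iota\cdot 1_n$. Fix $\delta>0$ with $|s-s'|<\delta\Rightarrow\norm{f(s)-f(s')}<\e/3$ for all $f\in F$, and choose $0=r_0<\cdots<r_N=1$ fine enough that $\vp(f)$ and $t\mapsto\diag(f(\xi_i(t)))$ each vary by less than $\e/3$ over every $[r_k,r_{k+1}]$. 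Setting $u\equiv U_{r_k}$ on $[r_k,r_{k+1}]$ already gives $\norm{\vp(f)(t)-U_{r_k}\diag(f(\xi_i(t)))U_{r_k}^*}<2\e/3$ there, by the triangle inequality together with the fact that $U_{r_k}$ diagonalises exactly at $r_k$ --- an estimate that never mentions crossings. It then remains to remove the finitely many jumps $U_{r_k}\rightsquigarrow U_{r_{k+1}}$. At $r_{k+1}$ both unitaries approximately diagonalise the same fibre, so $U_{r_k}^*U_{r_{k+1}}$ approximately commutes with the image of the generators and hence lies near the commutant $\mathcal{C}$ of the representation $f\mapsto\diag(f(\xi_i(r_{k+1})))$; round it to a nearby unitary $V'\in\mathcal{C}$. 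Over a short reserved segment I move $u$ first from $U_{r_k}$ to $U_{r_k}V'$ along $U_{r_k}W(\theta)$, where $W(\theta)$ runs from $1$ to $V'$ inside the unitary group of $\mathcal{C}$ --- this group is connected, being a product of full unitary groups indexed by the distinct values among the $\xi_i(r_{k+1})$, and along it the conjugation does not move at all --- and then from $U_{r_k}V'$ to $U_{r_{k+1}}$ along a short geodesic whose cost is controlled by $\norm{U_{r_k}^*U_{r_{k+1}}-V'}$. Taking the rounding fine enough keeps the total below $\e$, and concatenation yields a continuous $u\in C_m$.

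The main obstacle is precisely this last step: ordered eigenvalue functions cannot be tracked by a continuous unitary through crossings, so exact diagonalisation is impossible, and the argument must be organised so that \emph{all} approximation is absorbed into the unitary --- the eigenvalue data of $\psi$ being kept exactly equal to that of $\vp$, which is what makes the trace clause hold on the nose --- while the crossing ambiguity is paid for by the near-degeneracy of the target diagonal there and by the connectedness of the commutant used in the gluing.
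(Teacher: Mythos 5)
The paper does not actually prove this lemma --- it defers all such lemmas to \cite{Masumoto.FraisseZ} --- so the comparison can only be with the standard argument given there, and your proposal follows essentially that route: extract the fibrewise eigenvalue pattern from the irreducible decomposition of $\mathrm{ev}_t\circ\varphi$, observe that the sorted functions $\xi_i$ are continuous because they are (blocks of) the ordered eigenvalues of the self-adjoint element $\varphi(\iota\cdot 1_n)$, take these as the data of $\psi$, and build the conjugating unitary by a piecewise-constant choice of fibrewise diagonalisers glued at the joins. Your treatment of the ``moreover'' clause is clean and correct: since $\psi$ and $\varphi$ have the same fibrewise eigenvalue multiset, $\tau\circ\psi=\tau\circ\varphi$ exactly for \emph{every} trace $\tau$ on $C_m$, which is stronger than required.

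The one step that is not correct as written is the gluing claim that $U_{r_k}^*U_{r_{k+1}}$ ``lies near the commutant $\mathcal{C}$ of the representation $f\mapsto\diag(f(\xi_i(r_{k+1})))$.'' This inference fails when two of the values $\xi_i(r_{k+1})$ are distinct but very close (closer than any quantity controlled by $\e$ and $F$): the exact commutant then excludes the off-diagonal block between the two corresponding summands, yet approximate commutation with the elements $\diag(f(\xi_i(r_{k+1})))$, $f\in F$, places no constraint at all on that block, because those elements are themselves nearly scalar across it. So the distance from $U_{r_k}^*U_{r_{k+1}}$ to $\mathcal{C}$ need not be small. The standard repair --- which your closing sentence about ``near-degeneracy of the target diagonal'' gestures at but does not implement --- is to cluster the values $\xi_i(r_{k+1})$ into groups that are mutually $\delta$-separated, and to work with the commutant $\mathcal{C}_\delta$ of the coarser, merged diagonal algebra: approximate commutation with $\iota\cdot 1_n$ and the constant matrix units then does force $U_{r_k}^*U_{r_{k+1}}$ quantitatively close to $\mathcal{C}_\delta$ (the off-diagonal entries are bounded by the commutator norm divided by the separation), while unitaries in $\mathcal{C}_\delta$ still move $\diag(f(\xi_i(t)))$ by at most $2\sup_{|s-s'|<\delta}\norm{f(s)-f(s')}<2\e/3$ for $f\in F$, which is all the gluing needs; $U(\mathcal{C}_\delta)$ remains connected. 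With that substitution (and a more generous initial split of $\e$, since the path $U_{r_k}W(\theta)$ also picks up the variation of $\diag(f(\xi_i(t)))$ over the reserved segment, not zero as you assert), your argument goes through.
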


A useful technical tool is the definition of diameter.
\begin{definition}
Let $\varphi\colon C_n\to C_m$ be a diagonal map with associated maps $\{\xi_i\}$, for $i\leq\frac{m}{n}$. The \emph{diameter} of $\varphi$ is the quantity
\[
\max_i\sup_{s,t\in [0,1]} |\xi_i(s)-\xi_i(t)|.
\]
\end{definition}

The proof of amalgamation passes through the following two propositions. The first one uses the fact that continuous maps $[0,1]\to[0,1]$ are uniformly continuous, while the second one uses that elements of $C_n$ are uniformly continuous (when viewed as map from $[0,1]$ to a bounded ball of $M_n$.)
\begin{proposition}
Let $n,m\in\NN$ and suppose that $n$ divides $m$. Let $\varphi\colon C_n\to C_m$ be a diagonal map. Fix $\e>0$. Then there is $\delta>0$ such that for all $k$ and all diagonal maps $\psi\colon C_m\to C_k$, if the diameter of $\psi$ is $<\delta$, then the diameter of $\psi\circ\varphi$ is $<\e$.
\end{proposition}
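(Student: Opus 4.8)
The plan is to compute the maps associated to the composition $\psi\circ\varphi$ explicitly, and then to invoke uniform continuity of the finitely many maps associated to $\varphi$. The single key fact is that a continuous map $[0,1]\to[0,1]$ is uniformly continuous.

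First I would write out the composition. Let $\xi_1,\ldots,\xi_j$ (with $j=\frac{m}{n}$) be the maps associated to $\varphi$, implemented by a unitary $u\in C_m$, and let $\eta_1,\ldots,\eta_l$ (with $l=\frac{k}{m}$) be the maps associated to $\psi$, implemented by a unitary $v\in C_k$. Substituting the formula for $\varphi$ into that for $\psi$ and evaluating at $s\in[0,1]$, the block $\varphi(f)(\eta_r(s))=u(\eta_r(s))\diag(f(\xi_1(\eta_r(s))),\ldots,f(\xi_j(\eta_r(s))))u^*(\eta_r(s))$ appears in the $r$-th slot. Collecting the conjugating unitaries into $U(s):=\diag(u(\eta_1(s)),\ldots,u(\eta_l(s)))$ and setting $w:=v\cdot U$, one obtains
\[
\psi(\varphi(f))(s)=w(s)\,\diag\big(f(\xi_i(\eta_r(s)))\big)_{i\le j,\,r\le l}\,w^*(s).
\]
After reordering the diagonal entries so that the functions $\xi_i\circ\eta_r$ appear in nondecreasing order (folding the permutation into $w$), this exhibits $\psi\circ\varphi$ as a diagonal map whose $\frac{k}{n}=jl$ associated maps are exactly the compositions $\{\xi_i\circ\eta_r : i\le j,\ r\le l\}$.

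Next I would estimate the diameter. By definition the diameter of $\psi\circ\varphi$ equals $\max_{i,r}\sup_{s,t}|\xi_i(\eta_r(s))-\xi_i(\eta_r(t))|$, while the hypothesis that $\psi$ has diameter $<\delta$ says precisely that $|\eta_r(s)-\eta_r(t)|<\delta$ for all $r$ and all $s,t$. The choice of $\delta$ is dictated by uniform continuity: since $[0,1]$ is compact, each $\xi_i$ is uniformly continuous, and as there are only finitely many of them I can select a single $\delta>0$ such that $|x-y|<\delta$ implies $|\xi_i(x)-\xi_i(y)|<\e$ for every $i\le j$. With this $\delta$, for each $r$ and all $s,t$ we get $|\xi_i(\eta_r(s))-\xi_i(\eta_r(t))|<\e$, and taking the maximum over $i,r$ bounds the diameter of $\psi\circ\varphi$ by $\e$.

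The estimate itself is elementary; the only point requiring care is the bookkeeping in the first step, namely checking that the composition is genuinely diagonal and that its associated maps are the compositions $\xi_i\circ\eta_r$, which involves reordering the diagonal blocks and absorbing the resulting permutation together with $v$ and the blocks $u\circ\eta_r$ into a single unitary $w$. Crucially, this reindexing leaves the multiset of associated maps unchanged, so the diameter is computed from the functions $\xi_i\circ\eta_r$ regardless of the final ordering, and the uniform continuity argument applies verbatim. Note also that $\delta$ depends only on $\varphi$ and $\e$, and not on $k$ or on the particular $\psi$, as required.
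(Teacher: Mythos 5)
Your proof is correct and follows exactly the route the paper indicates: the paper gives no written proof of this proposition beyond the remark that it ``uses the fact that continuous maps $[0,1]\to[0,1]$ are uniformly continuous,'' and your argument supplies precisely that, after verifying that the associated maps of $\psi\circ\varphi$ are the compositions $\xi_i\circ\eta_r$. The only point you gloss over is that a single permutation need not sort the $\xi_i\circ\eta_r$ into pointwise nondecreasing order, but as you note the diameter bound depends only on the multiset $\{\xi_i\circ\eta_r\}$ (sorting is an $\ell^\infty$-contraction), so the estimate is unaffected.
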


\begin{proposition}\label{prop:key}
Let $n\in\NN$, $F\subseteq C_n$ be finite, and $\e>0$. Then there is $\delta>0$ such that for all $m\in \NN$ with $n$ dividing $m$, $\sigma\in T_{fd}(C_n)$ and $\tau\in T_{fd}(C_m)$, if $\varphi_1,\varphi_2\colon (C_n,\sigma)\to (C_m,\tau)$ are diagonal maps of diameter $<\delta$, then there is a unitary in $C_m$ such that 
\[
\norm{u\varphi_1(a)u^*-\varphi_2(a)}<\e, \text{ for all } a\in F.
\]
\end{proposition}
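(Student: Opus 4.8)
The plan is to show that, after absorbing the conjugating unitaries appearing in the definition of the diagonal maps, the two maps are already norm-close on $F$ \emph{without} any further unitary. Write $\varphi_k=\Ad(u_k)\circ D_k$, where $u_k\in C_m$ is the unitary from the definition of $\varphi_k$ and $D_k$ is the ``pure'' diagonal map $D_k(f)(t)=\diag(f(\xi^{(k)}_1(t)),\dots,f(\xi^{(k)}_j(t)))$, with $j=m/n$ and associated maps $\xi^{(k)}_1\le\cdots\le\xi^{(k)}_j$. If I can produce $\delta$ so that $\norm{D_1(f)-D_2(f)}<\e$ for all $f\in F$, then taking $u=u_2u_1^*$ gives $u\varphi_1(f)u^*=u_2D_1(f)u_2^*$ and $\varphi_2(f)=u_2D_2(f)u_2^*$, whence $\norm{u\varphi_1(f)u^*-\varphi_2(f)}=\norm{D_1(f)-D_2(f)}<\e$, as required. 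So the whole problem reduces to comparing the pure diagonal parts.

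First I would translate the trace condition into the language of measures. Using the trace–measure correspondence and $\tau_m(\diag(A_1,\dots,A_j))=\tfrac1j\sum_i\tau_n(A_i)$, the hypothesis that $\varphi_k$ pulls $\tau$ back to $\sigma$ becomes the identity of probability measures on $[0,1]$
\[
\mu_\sigma=\frac1j\sum_{i=1}^j(\xi^{(k)}_i)_*\mu_\tau,\qquad k=1,2,
\]
where $(\xi)_*\mu_\tau$ denotes the pushforward. Thus both families $\{\xi^{(1)}_i\}$ and $\{\xi^{(2)}_i\}$ produce, on average, the \emph{same} diffuse measure $\mu_\sigma$.

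The heart of the argument, and the step I expect to be the main obstacle, is to deduce from this that the sorted location functions are uniformly close: $|\xi^{(1)}_i(t)-\xi^{(2)}_i(t)|\le 2\delta$ for all $i$ and $t$. The point is that a small diameter forces $j$ to be large (a diagonal map of small diameter cannot pull a probability measure back to a diffuse one unless it has many blocks), and the eigenvalue locations must then track the quantiles of $\mu_\sigma$. Concretely, set $a^{(k)}_i=\inf_t\xi^{(k)}_i(t)$, so that $\xi^{(k)}_i(t)\in[a^{(k)}_i,a^{(k)}_i+\delta)$ and, by monotonicity, $a^{(k)}_1\le\cdots\le a^{(k)}_j$. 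Writing $F_\sigma(s)=\mu_\sigma([0,s])$ and $N_k(u)=\#\{i:a^{(k)}_i\le u\}$, the measure identity together with the oscillation bound sandwiches the counting function as
\[
N_k(s-\delta)\le j\,F_\sigma(s)\le N_k(s)
\]
for every $s$. Since $\sigma$ is faithful and diffuse, $F_\sigma$ is a strictly increasing homeomorphism of $[0,1]$, and inverting the two inequalities at $s=F_\sigma^{-1}(i/j)$ yields $F_\sigma^{-1}(i/j)-\delta\le a^{(k)}_i\le F_\sigma^{-1}(i/j)$ for both $k$. Hence $|a^{(1)}_i-a^{(2)}_i|\le\delta$, and combining with the oscillation bound gives $|\xi^{(1)}_i(t)-\xi^{(2)}_i(t)|\le 2\delta$ for all $i,t$. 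Crucially, this bound is uniform in $m$, $\sigma$, and $\tau$.

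Finally I would invoke uniform continuity. The elements of $F$ are continuous functions on the compact interval, so there is $\eta>0$ with $\norm{f(s)-f(s')}<\e$ whenever $|s-s'|\le\eta$ and $f\in F$. Choosing $\delta=\eta/2$ from the outset, the previous paragraph gives $\norm{f(\xi^{(1)}_i(t))-f(\xi^{(2)}_i(t))}<\e$ for every block $i$, every $t$, and every $f\in F$. Since $D_1(f)(t)-D_2(f)(t)$ is block diagonal, its norm is the maximum of these block differences, so $\norm{D_1(f)-D_2(f)}<\e$, completing the reduction begun in the first paragraph. I note that monotonicity of the associated maps is what allows the block-by-block (identity) matching to succeed; without it one would have to match blocks by a permutation varying with $t$, and continuity of the implementing unitary would become a genuine issue.
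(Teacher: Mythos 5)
Your argument is correct and complete: the reduction to the unitary-free diagonal parts via $u=u_2u_1^*$, the measure identity $\mu_\sigma=\tfrac1j\sum_i(\xi^{(k)}_i)_*\mu_\tau$, the quantile sandwich $F_\sigma^{-1}(i/j)-\delta\le a^{(k)}_i\le F_\sigma^{-1}(i/j)$ (using that faithfulness and diffuseness of $\sigma$ make $F_\sigma$ a strictly increasing homeomorphism), and the final appeal to uniform continuity all check out. The paper itself omits the proof of this proposition, deferring to Masumoto's article, and your quantile-matching argument is essentially the standard one given there, so there is nothing further to compare.
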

Proposition~\ref{prop:key} is the key that opens the door to amalgamation. This is also where, in the case of other classes of algebras, technical issues arise. In fact, the main problem there is choosing (for example, in case $\varphi_1$ and $\varphi_2$ are maps between dimension drop algebras $A$ and $B$) the unitary $u$ in such a way that $u\varphi_1[A]u^*$ remains inside $B$. The amount of work required for this step in case of dimension drop algebras and Razak blocks is acceptable, but technicalities become a real issue in case of generalised Razak blocks.

\begin{theorem}
The class $\mathcal K$ has the Near Amalgamation Property.
\end{theorem}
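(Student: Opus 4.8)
The plan is to amalgamate the two given legs by pushing everything into a single large interval algebra and then straightening one leg against the other with the unitary supplied by Proposition~\ref{prop:key}. Concretely, suppose we are given $(C_n,\sigma)\in\mathcal K$, a finite set $F\subseteq C_n$, a tolerance $\e>0$, and two embeddings $\phi_1\colon(C_n,\sigma)\to(C_{m_1},\tau_1)$ and $\phi_2\colon(C_n,\sigma)\to(C_{m_2},\tau_2)$ in $\mathcal K$. First I would apply the reduction lemma to each $\phi_i$, with the set $F$ and tolerance $\e/4$, to replace it by a \emph{diagonal} map $\tilde\phi_i$ pulling the same trace $\tau_i$ back to $\sigma$ and satisfying $\norm{\phi_i(a)-\tilde\phi_i(a)}<\e/4$ for $a\in F$. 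Since every $\psi_i$ constructed below is a contractive $^*$-homomorphism, this perturbation will propagate unchanged through the final estimate, so we may as well assume $\phi_1,\phi_2$ are diagonal.

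Next I would fix the common target. Let $m$ be any common multiple of $m_1$ and $m_2$ — permitted because $\bar p$ is the largest supernatural number, so every matrix size divides it — and construct, into one and the same algebra $(C_m,\tau)$, two diagonal embeddings $\psi_1\colon(C_{m_1},\tau_1)\to(C_m,\tau)$ and $\psi_2\colon(C_{m_2},\tau_2)\to(C_m,\tau)$ of arbitrarily small diameter. The hard part is exactly this step: one must produce small-diameter diagonal maps that (i) stay injective, hence pull back a faithful diffuse trace, and (ii) pull back the \emph{same} target trace $\tau$ to the two prescribed traces $\tau_1$ and $\tau_2$. I would realize each $\psi_i$ with a large number $j_i=m/m_i$ of sheets $\xi^{(i)}_1,\ldots,\xi^{(i)}_{j_i}$, each of small oscillation but collectively covering $[0,1]$ (to preserve injectivity), and then use the correspondence between traces and Radon measures, together with Lemma~\ref{lemma:movingtraces}, to tune the sheets and the single measure $\mu_\tau$ so that the pushforward identity $\mu_{\tau_i}=\frac{1}{j_i}\sum_{l}(\xi^{(i)}_l)_*\mu_\tau$ holds for $i=1,2$ simultaneously.

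Finally I would fix the thresholds and conjugate. Apply Proposition~\ref{prop:key} to $(C_n,\sigma)$, $F$, and $\e/2$ to obtain $\delta>0$; then apply the preceding proposition to each (now fixed) inner map $\phi_i$ with target diameter $\delta$ to obtain thresholds below which the outer diameter must lie, and choose the diameters of $\psi_1,\psi_2$ beneath both. The composites $\psi_1\circ\phi_1$ and $\psi_2\circ\phi_2$ are then diagonal maps $(C_n,\sigma)\to(C_m,\tau)$ of diameter $<\delta$, and each pulls $\tau$ back to $\sigma$; Proposition~\ref{prop:key} therefore yields a unitary $u\in C_m$ with $\norm{u\,\psi_1\phi_1(a)\,u^*-\psi_2\phi_2(a)}<\e/2$ for all $a\in F$. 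Replacing $\psi_1$ by $\Ad(u)\circ\psi_1$ — still a morphism of $\mathcal K$, since conjugation by a unitary is a trace-preserving automorphism of $C_m$ — and adding up the three perturbations $\e/4+\e/4+\e/2=\e$ gives the desired amalgamation with $D=(C_m,\tau)$. The single genuine obstacle is the simultaneous small-diameter, trace-matched construction of the second paragraph; everything else is assembled from the lemmas already in hand.
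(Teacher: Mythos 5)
Your argument is correct and is essentially the paper's proof: perturb to diagonal maps, compose with small-diameter trace-compatible diagonal embeddings into a common interval algebra, and conjugate using the unitary from Proposition~\ref{prop:key}, with the diameter threshold $\delta'$ chosen via the composition proposition. The only real difference is one of ordering: the paper first invokes Lemma~\ref{lemma:movingtraces} together with the JEP to normalize every trace to the Lebesgue trace $\tau_\lambda$ at the outset, which dissolves the step you flag as the ``single genuine obstacle'' --- once all traces are $\tau_\lambda$, the required small-diameter embeddings are just the standard staircase maps with sheets $\xi_l(t)=(l-1+t)/j$, which visibly preserve $\tau_\lambda$, so no simultaneous trace-tuning is needed.
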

\begin{proof}
Let $\lambda$ be the Lebesgue measure on $[0,1]$. We denote by $\tau_\lambda$ its associated trace on $C([0,1])$. This gives a trace (again denoted by $\tau_\lambda$) on each $C_n$. By Lemma~\ref{lemma:movingtraces} and the JEP, it is enough to show that when given $n,m\in\NN$ with $n$ dividing $m$, a finite $F\subseteq C_n$, $\e>0$ and two $^*$-homomorphisms $\varphi_1,\varphi_2\colon (C_n,\tau_\lambda)\to (C_m,\tau_\lambda)$, then one can find a large enough $k$ and two $^*$-homomorphisms $\psi_1,\psi_2\colon (C_m,\tau_\lambda)\to (C_k,\tau_\lambda)$ such that 
\[
\norm{\psi_1\circ\varphi_1(a)-\psi_2\circ\varphi_2(a)}<\e, \text{ for all } a\in F.
\]
First, one finds $\delta$ so that it satisfies Proposition~\ref{prop:key} for $F$ and $\e$. Secondly, one finds $\delta'$ so small that whenever we have a map of diameter $<\delta'$, then the diameter of both $\psi\circ\varphi_1$ and $\psi\circ\varphi_2$ is $<\delta$. Finding maps of arbitrarily small diameter which pull back faithful diffuse traces is easy for interval algebras, but requires some work for dimension drop and (generalised) Razak blocks; in this case, the thesis follows by applying again Lemma~\ref{lemma:movingtraces}.
\end{proof}
Let $M$ be the \Fraisse limit of $\mathcal K$. To show that $M$ is isomorphic to the universal UHF algebra $M_{\bar p}$, first one shows that $M$ is simple, monotracial, and AF (that is, a limit of finite-dimensional \cstar-algebras). All such verifications are not difficult, assuming the reader is comfortable with \cstar-algebra theory (one can follow the ideas of \cite[\S4]{Masumoto.FraisseZ}). Then, one computes $K_0(M)$, the $K_0$-group of $M$, and shows that such group must be divisible, and therefore equal to $\mathbb Q$. The result then follows from Elliott's classification theorem (\cite{Elliott.intertwining}), asserting that AF algebras are classified by their $K_0$ group. 

For the other classes (and limits) treated in Theorem~\ref{thm:CstarFraisse}, one can either appeal to classification results for amenable \cstar-algebras, or show that the sequences explicitly defining the objects $\mathcal Z$, $\mathcal W$, and $\mathcal Z_0$ (as given in \cite{JiangSu}, \cite{Jacelon:W} and \cite{JacelonVignati}) are generic (i.e., their limit is the \Fraisse limit).

\subsubsection{Strong self-absorption of $\mathcal Z$ by \Fraisse theoretic methods}
One of the most important features of $\mathcal Z$ is that $\mathcal Z$ is isomorphic to $\mathcal Z\otimes\mathcal Z$ in a very strong sense (this is called 
 \emph{strong self-absorption}, see \cite{JiangSu}). 

Can one prove strong self-absorption of $\mathcal Z$ with \Fraisse theoretic methods? If done in a clumsy way (e.g., by adding to the class $\mathcal K_\mathcal Z$ all possible tensor products of dimension drop algebras and considering all maps which pull back faithful diffuse trace), amalgamation fails, as one is capable to build $K_1$-related obstructions in $[0,1]^2$ which do not appear in $[0,1]$. Ghasemi, in \cite{Ghasemi.Fraisse} was more careful, and restricting the class of maps considered, showed that one can construct a \Fraisse class which is rich enough to have two generic sequences, one having $\mathcal Z$ as its limit, and the other one that has $\mathcal Z\otimes\mathcal Z$.

While in general, even without the use of \Fraisse theory, it is not difficult to show that $\mathcal Z$ tensorially absorbs itself, the same cannot be said about Jacelon's $\mathcal W$. It is known that $\mathcal W$ and $\mathcal W\otimes\mathcal W$ are isomorphic (by deep, long, and convoluted classification methods e.g. \cite{GongLin.Class2}), but a direct proof has yet to be found. There have been unsuccessful, yet serious, attempts to use \Fraisse theory to show that $\mathcal W\cong\mathcal W\otimes\mathcal W$ without the aid of classification tools. This problem seems difficult and deserves to be stated. Similar problems can be stated for the algebra $\mathcal Z_0$.
\begin{problem}
Use \Fraisse theory to show that $\mathcal W\cong\mathcal W\otimes\mathcal W$, that is, find a \Fraisse class which contains enough Razak blocks and their tensor products, which has two generic sequences naturally giving $\mathcal W$ and $\mathcal W\otimes\mathcal W$.
\end{problem}
\section{\Fraisse limits in functional analysis}\label{s.fa}
In this section we record \Fraisse theoretic results in functional analysis. We will show below that the Urysohn space, the Hilbert space $\ell_2(\NN)$, the Gurarij space, the Poulsen simplex, and the noncommutative versions of the latter two are \Fraisse limits of suitable classes of objects. The Urysohn space, $\ell_2(\NN)$, and the Gurarij space were treated in \cite{BY:Fraisse} (see also \cite{KubisSolecki}), while the Poulsen simplex, its noncommutative version and the noncommutative Gurarij space were treated in \cite{Lupini.fa} and \cite{Lupini.NCGurarij}. The Poulsen simplex was also studied in \cite{TornConley.Fraisse}.

\subsection{The Urysohn space}

A remarkable date for topologists is August 3rd, 1924, when, in a letter to Hausdorff, Urysohn announced the construction of a separable complete metric space containing isometrically any other separable metric space and satisfying a strong homogeneity property.\footnote{The letter does not contain any detail of the construction, which Hausdorff re-did himself.} 
\begin{definition}
A separable metric space $X$ is \emph{ultrahomogeneous} if every isometry between finite subsets of $X$ can be extended to a self-isometry of $X$.
\end{definition}
\begin{definition}
A 
 \emph{Urysohn space} is an ultrahomogeneous separable complete metric space  in which every separable metric space isometrically embeds.
\end{definition}
Urysohn shows that a Urysohn space exists, and further, that it is unique up to isometry. It does make sense therefore to talk about \emph{the} Urysohn space $\mathbf U$. The Urysohn space was somewhat rediscovered by Katetov in \cite{Katetov}, who constructed a space which is universal for metric spaces of arbitrary density character. 

With ultrahomogeneity for the class of finite metric spaces in our pocket, it remains to formalise the proper class of objects we are interested in, and their maps, to recognise $\mathbf U$ as a \Fraisse limit. This was done in \cite{BY:Fraisse}.
\begin{theorem}
Let $\mathcal K_{\mathbf U}$ be the class of finite metric spaces with maps being isometric embeddings. Then $\mathcal K_{\mathbf U}$ is a \Fraisse class whose \Fraisse limit is $\mathbf U$.
\end{theorem}
Similarly to the Urysohn space, the Urysohn sphere, which plays the same role as $\mathbf U$ does for metric spaces whose diameter is bounded by $1$, can be obtained as a \Fraisse limit.
\subsection{The Gurarij spaces}
The language of metric spaces is the simplest possible language of metric structures, as only the metric is in the language. In the discrete setting, this correspond to the fact that a countable infinite set is the \Fraisse limit of finite sets. Further results are obtained by adding a bit more structure to the language.
\begin{definition}
A 
 \emph{Gurarij space} is a separable Banach space $\mathbf G$ having the property that, for any $\e>0$, any finite-dimensional Banach spaces $E\subseteq F$, any isometric linear embedding $E\to\mathbf G$ can be extended to a linear embedding $\varphi\colon F\to\mathbf G$ satisfying $\norm{\varphi}\cdot\norm{\varphi^{-1}}\leq 1+\e$.
\end{definition}
The existence of a Gurarij space was proved in \cite{Gurarij}, but its uniqueness had to wait for \cite{Lusky}, where it was shown by the use of deep techniques of Lazar and Lindenstrauss (\cite{LazarLinde}). Let $\mathbf G$ be the unique (up to linear isometries) Gurarij space. After an unpublished model theoretic proof of Henson, Kubi\`s and Solecki (\cite{KubisSolecki}) proved uniqueness of $\mathbf G$ by essentially showing that it is a \Fraisse limit. Their argument was formalised in \cite{BY:Fraisse} when the general theory was developed.
\begin{theorem}
Let $\mathcal K_{\mathbf G}$ be the class of finite-dimensional Banach spaces with maps being isometric linear embeddings. Then $\mathcal K_{\mathbf G}$ is a \Fraisse class whose \Fraisse limit is $\mathbf G$.
\end{theorem}
We further add structure, and consider Hilbert spaces. The class of finite-dimensional Hilbert spaces together with Hilbert space isometries has the amalgamation property, hence it is a \Fraisse class. The \Fraisse limit is the unique separable infinite-dimensional Hilbert space, $\ell_2(\NN)$.

We now consider a noncommutative analog of Gurarij's $\mathbf G$. (For more on operator spaces, see Sinclair's article in this volume.)
\begin{definition}
An 
 \emph{operator space} is a Banach subspace of $\mathcal B(H)$ for some Hilbert space $H$. Pertinent for linear maps between operator spaces is their completely bounded norm. If $E$ and $F$ are operator spaces and $\varphi\colon E\to F$ is a linear map, we denote by $\varphi_n\colon M_n(E)\to M_n(F)$ its amplification, where $M_n(E)$ is the operator space of $n\times n$ $E$-valued matrices. The \emph{completely bounded norm} of $\varphi$ is given by
\[
\norm{\varphi}_{cb}:=\sup_n\norm{\varphi_n}.
\]
\end{definition}
Notions of amenability often pass through internal approximations via finite (or finite-dimensional) objects in the category. The following is the appropriate notion for operator spaces; it is strongly linked to the notion of exactness of \cstar-algebras (\cite{Pisier}).
\begin{definition}
Let $c\in\mathbb R$. An operator space $X$ is $c$-\emph{exact} if for all $\e>0$ and any finite-dimensional $E\subseteq X$, there is $n$ and an operator subspace $F\subseteq M_n$ such that there is an isomorphism $\varphi\colon E\to F$ with $\norm{\varphi}_{cb}\cdot\norm{\varphi^{-1}}_{cb}<c+\e$. An operator space is 
 \emph{exact} if it is $c$-exact for some $c\in\mathbb R$.
\end{definition}

\begin{definition}
A separable operator space $\mathbf {NG}$ is a 
 \emph{noncommutative Gurarij space} if it satisfies the following: suppose a finite-dimensional operator space $F$ is $1$-exact, $E$ is a subspace of $F$, and $E'$ is a subspace of $\mathbf {NG}$ which is isomorphic to $E$ via a completely bounded $\varphi$ whose inverse is completely bounded. Then, for every $\e>0$, there is a subspace $F'\subseteq \mathbf {NG}$ which contains $E'$, and an isomorphism $\tilde\varphi\colon F \to F'$ which extends $\varphi$ and satisfies $\norm{\tilde\varphi}_{cb}\cdot\norm{\tilde\varphi^{-1}}_{cb}\leq(1+\e)\norm{\varphi}_{cb}\cdot\norm{\varphi^{-1}}_{cb}$.
\end{definition}

A noncommutative Gurarij space exists by \cite{Oikhberg.Gurarij}, where a weaker version of universality, for those operator spaces which can be locally approximated by finite-dimensional \cstar-algebras, was proved. Again in \cite{Oikhberg.Gurarij}, a weak form of uniqueness was established: it was shown that any two noncommutative Gurarij spaces are $c$-isomorphic, for every $c>1$, that is, one can find an isomorphism $\varphi$ satisfying $\norm{\varphi}_{cb}\cdot\norm{\varphi^{-1}}_{cb}<c$. Later, uniqueness (up to linear complete isometries) was shown in \cite{Lupini.NCGurarij} with \Fraisse theoretic methods, while also obtaining the appropriate homogeneity in this setting.
\begin{theorem}
Let $\mathcal K_{OS}$ be the class of finite-dimensional $1$-exact operator spaces with maps being completely isometric embeddings. Then $\mathcal K_{OS}$ is a \Fraisse class whose \Fraisse limit is the noncommutative Gurarij space $\mathbf{NG}$, which is therefore unique (up to linear complete isometries) and universal for all separable $1$-exact operator spaces.
\end{theorem} 

\subsection{The Poulsen simplex and the Poulsen operator system}\label{ss.Poulsen}

Before getting into the study of the Poulsen simplex, we review a few definitions from Choquet's theory. We direct the reader to \cite{Phelps} for an introduction to this topic.

\begin{definition}
Let $X$ be a compact subset of a locally convex space $E$. Let $\mu$ be a Borel probability measure on $X$ and let $x\in X$. We say that $x$ is \emph{the barycenter} of $\mu$ if for every continuous linear functional $f$ on $E$ we have that $f(x)=\int_Xfd\mu$.
\end{definition}
The following question was instrumental for the origins and developments of Choquet's theory: If $X$ is a compact convex subset of a locally convex space $E$ and $x\in X$, does there exist a probability measure $\mu$ on $X$ which is supported on the extreme points of $X$ and which has $x$ as its barycenter? If it exists, is it unique?

In case $X$ is metrizable, the first question has a positive answer (\cite{Choquet1}). The following is one of the equivalent definitions of Choquet simplex one finds in literature (see \cite{Choquet2}).
\begin{definition}
Let $X$ be a metrizable compact convex subset of a locally convex space $E$. $X$ is said to be a 
 \emph{Choquet simplex} if for every $x\in X$ there is a unique probability measure $\mu$ on $X$ which is supported on the extreme points of $X$ and has $x$ as its barycenter\footnote{In case of nonmetrizable compact convex spaces, the extreme boundary does not have to be Borel, so we ask for uniqueness among measures vanishing on any Baire subset of $X\setminus\delta X$.}.
\end{definition}

Compact convex sets are dual to function systems (see \S2 of Sinclair's article):

\begin{definition}
A 
 \emph{function system} is a pointed ordered vector space $(V,u)$ where $u$ is a positive Archimedean order unit.
\end{definition}

If $(V,u)$ is a function system, $V$ can be endowed with a norm defined by 
\[
\norm{x}=\inf_{r>0} \{-ru\leq x\leq ru\}.
\] 
Isomorphisms in this class are surjective linear isometries preserving the order unit. From the definition of the norm, one can define the space of states on a function system $(V,u)$, by considering all of those positive linear functionals on $V$ which have norm $1$. This is a compact convex set. Conversely, if $K$ is a compact convex set, let $A(K)$ be the space of real affine continuous functions on $K$. $A(K)$ is a function system, with the order given by function domination and the order unit being the identity function. Kadison's representation theorem, stated below, gives the required duality (see \cite[Theorem II.1.8]{Alfsen.Book}):\begin{theorem}
The assignment which associated a compact convex set $K$ to the function system $A(K)$ is a controvariant equivalence of categories.
\end{theorem}

Under this equivalence, metrizable Choquet simplices correspond to those separable function systems which are in addition Lindenstrauss spaces (preduals of $L_1$ spaces).

\begin{definition}
A 
 \emph{Poulsen simplex} is a metrizable Choquet simplex whose extreme points are dense.
\end{definition}

A Poulsen simplex $\mathbf P$ was constructed in \cite{Poulsen}. In \cite{Lind-Olsen-Sternfeld}, it was shown that $\mathbf P$ is the unique Poulsen simplex (up to affine homeomorphism). Methods used in approaching the study of $\mathbf P$ are closely related to those used for the Gurarij space $\mathbf G$ (see e.g. \cite{Olsen}).

In \cite{TornConley.Fraisse}, and later \cite{Lupini.fa}, the Poulsen simplex $\mathbf P$ was recognised as the space of states of a \Fraisse limit of the class of function systems.

In particular, in \cite{TornConley.Fraisse}, it was showed that $A(\mathbf P)$ is the unique separable function system that is approximately homogeneous and universal for separable function systems. In a sense, $A(\mathbf P)$ has the same properties as Gurarij's $\mathbf G$ does, but in the category of function systems.

\begin{theorem}[{\cite{TornConley.Fraisse}}, see also {\cite[\S6 and 7]{Lupini.fa}}]
The class of finite-dimensional function systems where maps are order unit preserving, linear, isometric embeddings is a \Fraisse class whose \Fraisse limit is $A(\mathbf P)$.
\end{theorem}

The universality properties (for function spaces) one obtains for $A(\mathbf P)$ translate in the language of Choquet simplices in that a metrizable compact convex set is a Choquet simplex if and only if it is affinely homeomorphic to a closed face of $\mathbf P$.

We study now the noncommutative version of Poulsen's $\mathbf P$.

\begin{definition}
An 
 \emph{operator system} is an operator subspace of $\mathcal B(H)$ which is closed by adjoints and contains the unit of $\mathcal B(H)$.
\end{definition}

As operator spaces can be viewed as noncommutative Banach spaces, the work of Arveson (\cite{Arveson1} and \cite{Arveson2}) shows that operator systems provide the natural noncommutative analogues of function systems. In fact, function systems are those operator systems contained in a unital abelian \cstar-algebra. For this reason, the space introduced below is considered to be the noncommutative version of Paulsen's $\mathbf P$.

In the same way to a function system one can associate its state space, operator systems give rise to noncommutative Choquet simplices. Noncommutative Choquet simplices are special cases of noncommutative compact convex spaces. The latter were introduced in \cite{KennedyDavidson} and extensively studied in \cite{KennedyShamovich}. We will not give the precise definition of a noncommutative Choquet simplex here, as it is fairly technical, and we direct the reader to \cite[\S4]{KennedyShamovich} for the details. The important correspondence to retain is the existence of an equivalence of categories between the category of noncommutative compact convex spaces and that of operator systems, associating to each noncommutative compact convex space $X$ an operator system $A(X)$. Again noncommutative Choquet simplices correspond to those operator systems satisfying Lindestrauss-like conditions.

In this setting, Kennedy and Shamovich proved in \cite{KennedyShamovich} the existence of a metrizable 
 noncommutative Poulsen simplex $\mathbf {NP}$ having \emph{dense extreme points} (again, we decided not to be precise here to avoid too many technicalities). Even though it is not yet known whether this denseness-like property alone gives uniqueness of $\mathbf{NP}$ among metrizable noncommutative Choquet simplices, the corresponding (via the equivalence of categories of \cite{KennedyDavidson}) operator system associated to $\mathbf{NP}$ has been recognised as a \Fraisse limit.

\begin{theorem}[{\cite[\S7]{Lupini.fa}}]
The class of finite-dimensional exact operator systems where maps are unital completely isometric linear embeddings\footnote{Notice that these automatically preserve the adjoint.} is a \Fraisse class, whose \Fraisse limit is  $A(\mathbf {NP})$, the unique operator system having the noncommutative Poulsen simplex $\mathbf{NP}$ as its dual.
\end{theorem}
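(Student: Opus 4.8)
The plan is to verify that the class $\mathcal K_{\mathbf{NP}}$ of finite-dimensional exact operator systems satisfies the three properties (JEP, NAP, WPP) required of a \Fraisse class, and then to identify its limit. The joint embedding property is essentially automatic: given two finite-dimensional exact operator systems $E_1,E_2$, one can take their direct sum in the category of operator systems (the unital amalgam over the scalars $\mathbb C\cdot 1$), which remains finite-dimensional and exact, since exactness is preserved under finite direct sums; the canonical unital complete order embeddings then witness JEP. The weak Polish property requires checking that each $(\mathcal K_{\mathbf{NP}})_n$ is separable under the pseudometric $d_n$; this follows because a finite-dimensional exact operator system generated by $n$ elements is determined, up to small perturbation in the relevant pseudometric, by finitely much matricial order data (the matrix states / the sequence of matrix cones on $M_k(E)$ for $k\le$ some bound controlled by the exactness modulus), and these data range over a separable space. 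I would treat JEP and WPP as the routine verifications.

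The heart of the matter, as always in these \Fraisse arguments, is the near amalgamation property, and this is where I expect the main obstacle to lie. The strategy I would follow is the operator-system analogue of the Gurarij argument: given $E$ with two embeddings $\phi_1\colon E\to C$, $\phi_2\colon E\to C$ into a common exact operator system $C$, together with a finite set $F\subseteq E$ and $\e>0$, I must produce an exact $D$ and embeddings $\psi_1,\psi_2\colon C\to D$ that approximately identify $\phi_1$ and $\phi_2$ on $F$. The key analytic input is an approximate-injectivity/extension property for $1$-exact operator systems: because $C$ is $1$-exact, finite-dimensional suboperator systems of $C$ embed, up to completely bounded order isomorphism with constant close to $1$, into matrix algebras $M_k$, and conversely one can use Arveson's extension theorem (unital completely positive maps into $\mathcal B(H)$ extend) to propagate partially defined complete order embeddings. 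I would use these two facts to build $D$ as a pushout: concretely, amalgamate $C$ with itself over the approximate copy of $E$ sitting inside via $\phi_1$ and $\phi_2$, realizing the amalgam inside a sufficiently large matrix algebra or inside $\mathcal B(H)$ and then taking the operator system generated by the two copies. Verifying that this amalgam is again exact (it is finite-dimensional, so this reduces to a completely bounded perturbation estimate) and that the two structural maps are genuine unital complete order embeddings satisfying the $F$-and-$\e$ condition is the technical crux.

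Once NAP, JEP, and WPP are established, Theorem~2.8 (the continuous \Fraisse theorem) furnishes a \Fraisse limit, a separable operator system $M$ which is $\mathcal K_{\mathbf{NP}}$-universal and approximately $\mathcal K_{\mathbf{NP}}$-homogeneous. It remains to identify $M$ with $A(\mathbf{NP})$. Here I would invoke the equivalence of categories between noncommutative compact convex spaces and operator systems: passing to duals, $M$ corresponds to a metrizable noncommutative Choquet simplex, and the universality and approximate homogeneity of $M$ translate, under the duality, exactly into the defining universality and homogeneity properties that single out $\mathbf{NP}$. The universality of $M$ among separable exact operator systems gives, dually, that every metrizable noncommutative Choquet simplex arises as a suitable face/quotient of the dual of $M$, and the homogeneity gives the denseness-of-extreme-points characterisation; uniqueness of the \Fraisse limit up to isomorphism then yields that $M\cong A(\mathbf{NP})$ and, in particular, that $A(\mathbf{NP})$ is the unique operator system with these homogeneity and universality properties. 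The only subtlety in this last step is that, as remarked in the statement, the denseness property alone is not known to characterise $\mathbf{NP}$ among metrizable noncommutative Choquet simplices; so the identification must be made at the level of the operator system through the \Fraisse-theoretic universality and homogeneity, not through the geometry of the simplex directly.
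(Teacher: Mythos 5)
A preliminary caveat: the survey states this result purely as a citation to \cite[\S 7]{Lupini.fa} and contains no proof of its own, so there is no in-paper argument to compare yours against line by line; what follows measures your outline against the strategy of the cited source. Your skeleton --- JEP and WPP as routine verifications, NAP as the crux, identification of the limit via the duality with noncommutative compact convex sets --- is the right one, and your instinct that everything hinges on $1$-exactness is correct. (Note in passing that the class must be the finite-dimensional \emph{$1$-exact} operator systems: every finite-dimensional operator space is $c$-exact for some $c$, so ``exact'' without the constant $1$ is vacuous in finite dimensions, and your write-up slides between the two notions.)

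The genuine gap is in the NAP step, in two places. First, Arveson's extension theorem produces unital completely positive maps, not complete order embeddings; the maps $\psi_1,\psi_2\colon C\to D$ you obtain by extending over $E$ and taking the operator system generated by the two copies of $C$ have no reason to be embeddings, and this is not repaired by a perturbation estimate. The standard fix is a direct-sum correction: take $D$ of the form $M_n\oplus M_m$ and let $\psi_i$ be the completely positive extension into $M_n$ direct-summed with one fixed approximate complete order embedding $C\to M_m$ supplied by $1$-exactness; since the second summand is the same for $i=1,2$, the $(F,\e)$-agreement survives, and the direct sum with an embedding upgrades a unital completely positive map to a complete order embedding. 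Second, exactness of your amalgam: a finite-dimensional operator subsystem of $\mathcal B(H)$ need not be $1$-exact, and proving $1$-exactness of a pushout is not a ``completely bounded perturbation estimate''; the whole point of amalgamating inside matrix algebras is that $M_n\oplus M_m$ is itself an object of the class, so exactness of $D$ comes for free. Finally, the identification of the limit does not go through the geometry of $\mathbf{NP}$ (as you rightly note, density of extreme points is not known to characterise it); it goes through the operator-system side: the \Fraisse limit is the unique separable $1$-exact operator system that is universal in the sense of Kirchberg--Wassermann and approximately homogeneous, and $A(\mathbf{NP})$ is shown to have exactly these properties, whence the isomorphism and the uniqueness claim in the statement.
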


The space $A(\mathbf{NP})$ is the unique separable nuclear operator systems that is universal in the sense of Kirchberg and Wassermann (\cite{KirchWass}), that is, it contains a completely isometric copy of any separable exact operator system. Also, in the same fashion as for the Poulsen simplex, universality is expressed in that compact metrizable noncommutative Choquet simplices are exactly those homeomorphic to \emph{faces} of $\mathbf{NP}$.

\section{Connections with Ramsey theory}\label{s.ramsey}
In this section we review the continuous version of the KPT correspondence (\cite{KPT}), and we see how this was applied to study the automorphisms of some of the \Fraisse limits described in \S\ref{s.fa}, summarising some of the results of \cite{BartosovaRamsey1} and \cite{BartosovaRamsey2}. 

The main idea is the following: if $\mathcal K$ is a \Fraisse class of finitely-generated structures with \Fraisse limit $M$, then Ramsey theoretic properties of $\mathcal K$ correspond to dynamical informations about the group $\Aut(M)$. As any Polish group can be seen as the automorphism group of a \Fraisse limit (in an appropriately defined language, see \cite[Theorem 2.4.5]{Gao} and \cite[Theorem 6]{Melleray.Hjorth}, but also \cite[Chapter 5]{ThesisKaichouh}), \Fraisse theory has the potential of accessing certain dynamical features of Polish groups.

\begin{definition}
A Polish group $G$ is 
 \emph{extremely amenable} if every continuous action of $G$ on a compact metric space $X$ has a fixed point.
\end{definition}

As in \S\ref{s.prel}, a language for metric structure $\mathcal L$ and a class of finitely-generated $\mathcal L$-structures $\mathcal K$ are fixed. Recall that if $A$ belongs to $\mathcal K$, and $\bar a$ generates $A$, we use the notation $(A,\bar a)$ to record the tuple of generators.


Fix $(A,\bar a)$ and $B$ in $\mathcal K$. Let ${}^AB$ be the set of morphisms of $A$ into $B$. If $\varphi,\psi\in{}^AB$, set
\[
d_{\bar a}(\varphi,\psi):=d_B(\varphi(\bar a),\psi(\bar a))
\]
where $d_B$ is the metric on $B$. 
A 
 \emph{coloring} of ${}^AB$ is a $1$-Lipschitz map ${}^AB\to[0,1]$, where ${}^AB$ is considered with the metric $d_{\bar a}$.
\begin{definition}
A \Fraisse class $\mathcal K$ has the 
 \emph{approximate Ramsey property} if the following happens: for all $A$ and $B$ in $\mathcal K$, all finite $F\subseteq {}^AB$, and all $\e>0$, there is $C\in \mathcal K$ with the property that for every coloring $\gamma$ of ${}^AC$ there is $\varphi\in {}^BC$ such that 
\[
|\gamma(\varphi\circ\psi_1)-\gamma(\varphi\circ\psi_2)|<\e, \text{ for all }\psi_1,\psi_2\in F.
\]
\end{definition}
(First, notice that the definition above depends on the choice of generators of $A$. Second, in order the above definition is not vacuous, we assume that the empty function is $1$-Lipschitz, so that ${}^BC$ is nonempty.)

The following was proved in \cite{TsankovMelleray} for hereditary classes. A proof for non hereditary classes can be extracted from the proof contained in \cite{TsankovMelleray}, even though it has never been written down formally.
\begin{theorem}\label{thm:EA}
Let $\mathcal K$ be a \Fraisse class with \Fraisse limit $M$. The following are equivalent:
\begin{itemize}
\item $\mathcal K$ has the approximate Ramsey property;
\item the group $\Aut(M)$ is extremely amenable.
\end{itemize}
\end{theorem}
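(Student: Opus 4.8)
The plan is to deduce the equivalence from the metric version of the KPT correspondence, routing everything through two intermediate facts: an \emph{infinite} form of the approximate Ramsey property phrased inside $M$, and Pestov's characterisation of extreme amenability by finite oscillation stability. I would first isolate the infinite approximate Ramsey property (IARP): for all $A,B\in\mathcal K$, every finite $F\subseteq{}^AB$ and every $\e>0$, every coloring $\gamma$ of ${}^AM$ admits some $\varphi\in{}^BM$ with $\max_{\psi_1,\psi_2\in F}|\gamma(\varphi\circ\psi_1)-\gamma(\varphi\circ\psi_2)|<\e$. The equivalence of the stated finite property with IARP is routine in one direction and a compactness argument in the other: given the finite property, embed the witnessing $C$ into $M$ by universality and pull any coloring of ${}^AM$ back along that embedding to obtain IARP; conversely, writing $M$ as the limit of a cofinal sequence $A_n\in\mathcal K$, a family of bad colorings on the ${}^AA_n$ extends, via a McShane-type $1$-Lipschitz extension, to ${}^AM$, and any pointwise cluster point in the compact space $\Lip_1({}^AM,[0,1])$ is a coloring violating IARP, once one checks that every $\varphi\in{}^BM$ is approximated by an embedding landing in some $A_n$.

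The core is the equivalence of IARP with extreme amenability of $G:=\Aut(M)$. For the implication from extreme amenability, I would fix a coloring $\gamma$ and form the flow $X:=\overline{G\cdot\gamma}\subseteq\Lip_1({}^AM,[0,1])$, which is compact and metrizable because ${}^AM$ is separable and bounded $1$-Lipschitz functions are equicontinuous, with $G$ acting continuously by $(g\cdot c)(\theta)=c(g^{-1}\circ\theta)$. Extreme amenability produces a fixed point $c_0\in X$; being $G$-invariant and continuous, and since approximate homogeneity of $M$ forces every $G$-orbit in ${}^AM$ to be dense, $c_0$ is constant. Fixing a base embedding $j\colon B\to M$ and unwinding what it means for a constant function to lie in the pointwise closure of $G\cdot\gamma$, evaluated at the finitely many points $j\circ\psi$ with $\psi\in F$, yields $g\in G$ with small oscillation of $\gamma$ along these points, and $\varphi:=g\circ j$ is the desired witness.

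For the converse I would invoke Pestov's theorem, that extreme amenability of $G$ is equivalent to finite oscillation stability of bounded right-uniformly continuous functions $f\colon G\to[0,1]$. Given such an $f$, a finite $F_0\subseteq G$, and $\e>0$, right-uniform continuity lets me take $A=\langle\bar a\rangle\in\mathcal K$ large enough that $f$ is approximated to within $\e$ by $g\mapsto\gamma(g(\bar a))$ for a coloring $\gamma$ of ${}^AM$; applying IARP to a suitable $B\in\mathcal K$ and embeddings $\psi_s\colon A\to B$ chosen so that (inside $M$) $\psi_s$ sends $\bar a$ to a point approximating $s(\bar a)$, and then replacing the resulting $\varphi\in{}^BM$ by a nearby $g\rs B$ through approximate homogeneity, turns $\gamma(\varphi\circ\psi_s)$ into $f(gs)$ up to controlled error and so bounds $\osc(f\rs gF_0)$, giving finite oscillation stability.

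The main obstacle, and the only genuinely delicate point, is the passage from extreme amenability to IARP: the correct flow to feed to extreme amenability is the orbit closure of a single coloring rather than a space of all colorings (whose invariant points would collapse to constants for trivial reasons), and the whole argument turns on the fact that approximate homogeneity makes the $G$-action on ${}^AM$ topologically transitive, so that invariant continuous colorings are constant. A secondary, and genuinely necessary, point of care concerns non-hereditary $\mathcal K$: in the converse direction the substructure $\langle\bar a,\,s(\bar a):s\in F_0\rangle$ generated inside $M$ need not belong to $\mathcal K$, so one cannot take it as $B$; instead one takes $B=A_n$ for large $n$ from the defining sequence, together with embeddings $\psi_s\colon A\to A_n$ that realise $s\rs A$ only approximately (available since the $s(\bar a)$ are approximated within $A_n$ and match the quantifier-free type of $\bar a$ up to $\e$). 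Since everything is metric and holds up to $\e$, these approximate copies suffice after the error budget is distributed across the uniform-continuity, Lipschitz-extension, and homogeneity steps; this is precisely the adaptation of the hereditary argument of \cite{TsankovMelleray} alluded to before the statement.
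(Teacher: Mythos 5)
The paper does not actually prove this theorem: it cites \cite{TsankovMelleray} for hereditary classes and only remarks that the non-hereditary case ``can be extracted'' from that proof. Your proposal is a correct reconstruction of precisely that argument --- the infinite approximate Ramsey property as intermediary, the orbit closure of a single coloring (made constant by density of $\Aut(M)$-orbits in ${}^AM$ via approximate homogeneity) for one implication, Pestov's oscillation-stability criterion for the other, and the replacement of generated substructures by stages $A_n$ of the defining sequence to handle non-hereditary $\mathcal K$ --- so it matches the intended proof rather than departing from it.
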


To a Polish group $G$, one associates its universal minimal flow. 
\begin{definition}
Let $G$ be a Polish group. A $G$-\emph{flow} is a compact Hausdorff space $X$ equipped with a continuous action of $G$ on $X$. A $G$-flow is \emph{minimal} if it has no proper subflows. The 
 \emph{universal minimal flow} $M(G)$ of $G$ is a minimal $G$-flow which maps continuously and equivariantly onto every minimal $G$-flow.
\end{definition}
The universal minimal flow is of great interest in topological dynamics; in fact, extreme amenability of $G$ is equivalent to the universal minimal flow $M(G)$ being a singleton. In the case of automorphism groups of \Fraisse limits, another `smallness' property, namely metrizability of $M(G)$, can be associated to Ramsey-like conditions on the class of interest. This was studied extensively by Zucker (see e.g. \cite{Zucker1}).

We quickly overview to which of the structures mentioned in \S\ref{s.opalg} and \S\ref{s.fa} Theorem~\ref{thm:EA} applies. 

Let us first focus on the \Fraisse classes mentioned in \S\ref{s.opalg}. In the abelian setting, while the dual Ramsey theorem of Graham and Rothschild \footnote{The dual Ramsey Theorem is a powerful pigeonhole principle which is equivalent to a factorization result for colorings of Boolean matrices, see \cite{GrahamRoth}.} can be used to show that the automorphism group of $C(2^\NN)$ (which is, the homeomorphism group of the Cantor set) is metrizable (and indeed homeomorphic to the Cantor set itself, see \cite{GlasnerWeiss}), the same is not known for $C(\mathbb P)$, $\mathbb P$ being Bing's pseudoarc (even though $\Aut(C(\mathbb P))$ is not extremely amenable, as the action of $\Aut(C(\mathbb P))$ on $\mathbb P$ itself has no fixed points, by homogeneity of $\mathbb P$.) 

In the nonabelian setting, not much in known. Using Ramsey type results for the class of matrix algebras (either with the operator norm or the trace norm), it is shown in \cite{EFH:Fraisse} that the automorphism groups of UHF algebras and of the hyperfinite II$_1$ factor $\mathcal R$ are extremely amenable. (Indeed, even more, their automorphism groups are L\'evy, see \cite[\S5 and \S6]{EFH:Fraisse}.) The situation of the automorphism group of $\mathcal Z$, $\mathcal W$, and $\mathcal Z_0$ is yet unclear; the reason for this is that rephrasing Ramsey like conditions on diagonal maps between dimension drop algebras or (generalised) Razak blocks turns out to be extremely technical.
\begin{problem}
Are the automorphism groups of $\mathcal Z$, $\mathcal W$, or $\mathcal Z_0$ extremely amenable? Do they have metrizable universal minimal flow?
\end{problem}

Much more is known for the structures described in \S\ref{s.fa}, mainly thanks to the work of \cite{BartosovaRamsey1} and \cite{BartosovaRamsey2}. In \cite{BartosovaRamsey1}, the authors study the Ramsey property for Banach spaces and Choquet simplices. Establishing the approximate Ramsey property for the class of finite-dimensional Banach spaces and function systems give the following:
\begin{theorem}
The following hold:
\begin{itemize}
\item The automorphism group of the Gurarij space $\mathbf G$ is extremely amenable.
\item If $F$ is a face of the Poulsen simplex $\mathbf P$, the group of automorphisms of $\mathbf P$ stabilising $F$ is extremely amenable.
\end{itemize} 
\end{theorem}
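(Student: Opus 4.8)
The plan is to derive both statements from Theorem~\ref{thm:EA}, which converts extreme amenability of the relevant automorphism group into the \emph{approximate Ramsey property} (ARP) of a suitable \Fraisse class. For the Gurarij space the reduction is immediate: by the theorem identifying $\mathbf G$ as the \Fraisse limit of the class $\mathcal K_{\mathbf G}$ of finite-dimensional Banach spaces, Theorem~\ref{thm:EA} gives that $\Aut(\mathbf G)$ is extremely amenable if and only if $\mathcal K_{\mathbf G}$ has the ARP. Thus the whole content of the first bullet is the ARP for finite-dimensional Banach spaces, and this is where I would concentrate the effort.

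To establish the ARP for $\mathcal K_{\mathbf G}$ I would first discretize. Every finite-dimensional normed space embeds $(1+\e)$-isometrically into some $\ell_\infty^N$ by choosing a sufficiently fine finite $\e$-net of functionals in the dual unit ball, and morphisms into such sup-normed spaces are governed by combinatorics on the index set $\{1,\dots,N\}$. The combinatorial engine is the Dual Ramsey Theorem of Graham--Rothschild, whose rigid-surjection formulation matches the way a copy of one sup-normed space sits inside a larger one. The remaining, genuinely analytic, step is a compactness-and-perturbation argument upgrading the exact Ramsey statement on the $\ell_\infty^N$ layer to an \emph{approximate} statement for arbitrary finite-dimensional spaces, tracking the accumulated $\e$'s through the net approximations and through the $1$-Lipschitz colorings of the morphism spaces ${}^{A}B$. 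I expect this transfer — reconciling the discrete, error-free Dual Ramsey Theorem with the continuous, $\e$-tolerant ARP — to be the main obstacle.

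For the Poulsen simplex I would pass to function systems via Kadison duality: automorphisms of $\mathbf P$ correspond to automorphisms of $A(\mathbf P)$, which by the Conley--T\o rnquist theorem is the \Fraisse limit of finite-dimensional function systems. The case $F=\mathbf P$, i.e.\ extreme amenability of the full group $\Aut(\mathbf P)$, then follows from Theorem~\ref{thm:EA} together with the ARP for finite-dimensional function systems; this ARP I would prove by the same scheme as for Banach spaces, now embedding finite-dimensional function systems $(1+\e)$-isomorphically into concrete finite-dimensional systems such as the $A(\Delta_N)$ of affine functions on finite-dimensional simplices and again invoking the Dual Ramsey Theorem.

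To handle a proper face $F$, I would realize the stabilizer $\{g\in\Aut(\mathbf P):g(F)=F\}$ as the automorphism group of an \emph{expanded} \Fraisse structure: on each finite-dimensional function system mark the extra datum of the quotient map dual to the face (under Kadison duality a closed face of a simplex corresponds to an order ideal, equivalently a quotient, of the associated function system), and check that the expanded class is still \Fraisse with limit the pair $(A(\mathbf P),F)$ whose automorphism group is precisely the face stabilizer. Applying Theorem~\ref{thm:EA} to this expanded class reduces the second bullet to a \emph{relative} ARP for function systems carrying a marked quotient. I would deduce this relative statement from the plain ARP by a fibered argument: color the morphisms respecting the marking, first stabilize the color along the marked quotient $A(F)$ using the unmarked ARP applied to its finite-dimensional subsystems, then stabilize along a complementary transversal using the ambient ARP, and glue the two via the amalgamation that reconstructs a finite system from its marked quotient. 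Beyond the ARP itself, the technical heart here is compatibility: ensuring the two stabilizations can be performed simultaneously without reintroducing color oscillation across the marked face.
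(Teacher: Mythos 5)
Your treatment of the first bullet is essentially the route of the cited work of Barto\v{s}ov\'a--L\'opez-Abad--Lupini--Mbombo, which is all the paper itself invokes: reduce via Theorem~\ref{thm:EA} to the approximate Ramsey property of the class of finite-dimensional Banach spaces, prove that ARP by passing to the cofinal family $\{\ell_\infty^n\}$ and applying the Graham--Rothschild dual Ramsey theorem in its rigid-surjection form, and then run a perturbation argument to transfer the exact combinatorial statement to the $\e$-tolerant one. That part of your plan is sound as a sketch (the transfer from the cofinal subfamily to the whole class does require an extension/injectivity property of $\ell_\infty^n$, but this is exactly how the proof goes).

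The second bullet contains a genuine error. You claim that the case $F=\mathbf P$ gives extreme amenability of the full group $\Aut(\mathbf P)$, deduced from the ARP of the plain class of finite-dimensional function systems. Both claims are false: $\Aut(\mathbf P)$ acts continuously and minimally on the nontrivial compact metrizable space $\mathbf P$ --- indeed the central theorem of the very paper being cited is that $\mathbf P$ \emph{is} the universal minimal flow of $\Aut(\mathbf P)$ --- so this action has no fixed point and $\Aut(\mathbf P)$ is not extremely amenable; by Theorem~\ref{thm:EA} applied to the Conley--T\o rnquist class, the unpointed class of finite-dimensional function systems therefore \emph{cannot} have the ARP. (The theorem must be read for \emph{proper} closed faces $F$.) This is not a cosmetic slip: your strategy for proper faces is to deduce the ``relative'' ARP for function systems with a marked quotient from the plain ARP by a fibered stabilization argument, so the false input propagates and the whole second half collapses. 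The actual argument proves the ARP \emph{directly} for the expanded class of finite-dimensional function systems with a distinguished face (equivalently a distinguished state, in the extreme-point case), carrying the marked quotient through the dual-Ramsey combinatorics from the start; it is precisely the failure of the unpointed ARP that forces one to work in the expanded class, and the pointed statement is not a formal consequence of an unpointed one.
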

The main tool for establishing the above mentioned results is again the dual Ramsey Theorem. The same arguments, adapted to the proper setting, were replicated in \cite{BartosovaRamsey2} to show corresponding results for the noncommutative Gurarij space $\mathbf{NG}$ and the noncommutative Poulsen simplex $\mathbf{NP}$.

\bibliographystyle{plain} 
\bibliography{vignati_ref}

\begin{thebibliography}{10}

\bibitem{Alfsen.Book}
E.~M. Alfsen.
\newblock {\em Compact convex sets and boundary integrals}, volume~57 of {\em
  Ergeb. Math. Grenzgeb.}
\newblock Springer-Verlag, Berlin, 1971.

\bibitem{Arveson1}
W.~B. Arveson.
\newblock Subalgebras of {$C^{\ast} $}-algebras.
\newblock {\em Acta Math.}, 123:141--224, 1969.

\bibitem{Arveson2}
W.~B. Arveson.
\newblock Subalgebras of {$C^{\ast} $}-algebras. {II}.
\newblock {\em Acta Math.}, 128(3-4):271--308, 1972.

\bibitem{KwiaBart}
D.~Barto\v{s}ov\'{a} and A.~Kwiatkowska.
\newblock Lelek fan from a projective {F}ra\"{\i}ss\'{e} limit.
\newblock {\em Fund. Math.}, 231(1):57--79, 2015.

\bibitem{BartosovaRamsey2}
D.~Barto\v{s}ov\'{a}, J.~L\'{o}pez-Abad, M.~Lupini, and B.~Mbombo.
\newblock The {R}amsey property for operator spaces and noncommutative
  {C}hoquet simplices.
\newblock {\em J. Funct. Anal.}, 281(9):Paper No. 109192, 39, 2021.

\bibitem{BartosovaRamsey1}
D.~Barto\v{s}ov\'{a}, J.~Lopez-Abad, M.~Lupini, and B.~Mbombo.
\newblock The {R}amsey property for {B}anach spaces and {C}hoquet simplices.
\newblock {\em J. Eur. Math. Soc. (JEMS)}, 24(4):1353--1388, 2022.

\bibitem{BenYaacov.Linear}
I.~Ben~Yaacov.
\newblock The linear isometry group of the {G}urarij space is universal.
\newblock {\em Proc. Amer. Math. Soc.}, 142(7):2459--2467, 2014.

\bibitem{BY:Fraisse}
I.~Ben~Yaacov.
\newblock Fra\"\i ss\'e limits of metric structures.
\newblock {\em J. Symb. Log.}, 80(1):100--115, 2015.

\bibitem{BYBHU}
I.~Ben~Yaacov, A.~Berenstein, W.~Henson, and A.~Usvyatsov.
\newblock Model theory for metric structures.
\newblock In {\em Model theory with applications to algebra and analysis.
  {V}ol. 2}, volume 350 of {\em London Math. Soc. Lecture Note Ser.}, pages
  315--427. Cambridge Univ. Press, Cambridge, 2008.

\bibitem{BenYHenson}
I.~Ben~Yaacov and C.~W. Henson.
\newblock Generic orbits and type isolation in the {G}urarij space.
\newblock {\em Fund. Math.}, 237(1):47--82, 2017.

\bibitem{Bing1}
R.~H. Bing.
\newblock Concerning hereditarily indecomposable continua.
\newblock {\em Pacific J. Math.}, 1:43--51, 1951.

\bibitem{Blackadar.OA}
B.~Blackadar.
\newblock {\em Operator algebras}, volume 122 of {\em Encyclopaedia of
  Mathematical Sciences}.
\newblock Springer-Verlag, Berlin, 2006.
\newblock Theory of $C{\sp{*}}$-algebras and von Neumann algebras, Operator
  Algebras and Non-commutative Geometry, III.

\bibitem{CETWW}
J.~Castillejos, S.~Evington, A.~Tikuisis, S.~White, and W.~Winter.
\newblock Nuclear dimension of simple {{\(\mathrm{C}^*\)}}-algebras.
\newblock {\em Invent. Math.}, 224(1):245--290, 2021.

\bibitem{Choquet1}
G.~Choquet.
\newblock Existence et unicit\'e des repr\'esentations int\'egrales au moyen
  des points extremaux dans les c\^ones convexes.
\newblock Semin. {Bourbaki} 9 (1956/57), {No}. 139, 15 p. (1959)., 1959.

\bibitem{Choquet2}
G.~Choquet and P.A. Meyer.
\newblock Existence et unicit\'e des repr\'esentations int\'egrales dans les
  convexes compacts quelconques.
\newblock {\em Ann. Inst. Fourier}, 13(1):139--154, 1963.

\bibitem{TornConley.Fraisse}
Clinton Conley and Asger T\"{o}rnquist.
\newblock A {F}ra\"{\i}ss\'{e} approach to the {P}oulsen simplex.
\newblock In {\em Sets and computations}, volume~33 of {\em Lect. Notes Ser.
  Inst. Math. Sci. Natl. Univ. Singap.}, pages 11--24. World Sci. Publ.,
  Hackensack, NJ, 2018.

\bibitem{Connes.Class}
A.~Connes.
\newblock Classification of injective factors. {C}ases {$II_{1},$} {$II_{\infty
  },$} {$III_{\lambda },$} {$\lambda \not=1$}.
\newblock {\em Ann. of Math. (2)}, 104(1):73--115, 1976.

\bibitem{EFH:Fraisse}
C.~J. Eagle, I.~Farah, B.~Hart, B.~Kadets, V.~Kalashnyk, and M.~Lupini.
\newblock Fra\"\i ss\'e limits of {$C^*$}-algebras.
\newblock {\em J. Symb. Log.}, 81(2):755--773, 2016.

\bibitem{eagle2015quantifier}
C.~J. Eagle, I.~Farah, E.~Kirchberg, and A.~Vignati.
\newblock Quantifier elimination in \cstar-algebras.
\newblock {\em International Mathematics Research Notices},
  2017(24):7580--7606, 2017.

\bibitem{Elliott.intertwining}
G.~A. Elliott.
\newblock On the classification of inductive limits of sequences of semisimple
  finite-dimensional algebras.
\newblock {\em J. Algebra}, 38(1):29--44, 1976.

\bibitem{bourbaki}
I.~Farah, B.~Hart, M.~Lupini, L.~Robert, A.~Tikuisis, A.~Vignati, and
  W.~Winter.
\newblock Model theory of \cstar-algebras.
\newblock {\em Mem. Amer. Math. Soc.}, 271(1324):viii+127, 2021.

\bibitem{FHS.II}
I.~Farah, B.~Hart, and D.~Sherman.
\newblock Model theory of operator algebras {II}: Model theory.
\newblock {\em Israel J. Math.}, 201:477--505, 2014.

\bibitem{Fraisse}
R.~Fra\"{\i}ss\'{e}.
\newblock Sur l'extension aux relations de quelques propri\'{e}t\'{e}s des
  ordres.
\newblock {\em Ann. Sci. Ecole Norm. Sup. (3)}, 71:363--388, 1954.

\bibitem{Gao}
S.~Gao.
\newblock {\em Invariant descriptive set theory}, volume 293 of {\em Pure Appl.
  Math. (Boca Raton)}.
\newblock Boca Raton, FL: CRC Press, 2009.

\bibitem{KubisGarbu}
J.~Garbuli\'{n}ska-W\c{e}grzyn and W.~Kubi\'{s}.
\newblock A universal operator on the {G}urari\u{\i} space.
\newblock {\em J. Operator Theory}, 73(1):143--158, 2015.

\bibitem{Ghasemi.Fraisse}
S.~Ghasemi.
\newblock {Strongly self-absorbing \cstar-algebras and Fra\"{\i}ss\'e limits}.
\newblock {\em {Bull. Lond. Math. Soc.}}, 53(3):937--955, 2021.

\bibitem{GhasemiKubis}
S.~Ghasemi and W.~Kubi\'{s}.
\newblock Universal {AF}-algebras.
\newblock {\em J. Funct. Anal.}, 279(5):108590, 32, 2020.

\bibitem{GlasnerWeiss}
E.~Glasner and B.~Weiss.
\newblock The universal minimal system for the group of homeomorphisms of the
  {C}antor set.
\newblock {\em Fund. Math.}, 176(3):277--289, 2003.

\bibitem{GongLin.Class2}
G.~Gong and H.~Lin.
\newblock On classification of non-unital amenable simple
  {$\rm{C}^*$}-algebras, {II}.
\newblock {\em J. Geom. Phys.}, 158:103865, 102, 2020.

\bibitem{GongLinNiu.Class1}
G.~Gong, H.~Lin, and Z.~Niu.
\newblock A classification of finite simple amenable {$\mathcal Z$}-stable
  \cstar-algebras, {I}: {$\rm{C}^*$}-algebras with generalized tracial rank
  one.
\newblock {\em C. R. Math. Acad. Sci. Soc. R. Can.}, 42(3):63--450, 2020.

\bibitem{GongLinNiu}
G.~Gong, H.~Lin, and Z.~Niu.
\newblock A classification of finite simple amenable {$\mathcal Z$}-stable
  {${\rm C}^\ast$}-algebras, {II}: {${\rm C}^\ast$}-algebras with rational
  generalized tracial rank one.
\newblock {\em C. R. Math. Acad. Sci. Soc. R. Can.}, 42(4):451--539, 2020.

\bibitem{GrahamRoth}
R.~L. Graham and B.~L. Rothschild.
\newblock Ramsey's theorem for {$n$}-parameter sets.
\newblock {\em Trans. Amer. Math. Soc.}, 159:257--292, 1971.

\bibitem{Gurarij}
V.~I. Gurari\u{\i}.
\newblock Spaces of universal placement, isotropic spaces and a problem of
  {M}azur on rotations of {B}anach spaces.
\newblock {\em Sibirsk. Mat. \v{Z}.}, 7:1002--1013, 1966.

\bibitem{Hodges}
W.~Hodges.
\newblock {\em Model Theory}.
\newblock Cambridge University Press, 1993.

\bibitem{Homma}
T.~Homma.
\newblock A theorem on continuous functions.
\newblock {\em K{\=o}dai Math. Semin. Rep.}, 1952:13--16, 1952.

\bibitem{IrwSol.Pseudo}
T.~Irwin and S.~Solecki.
\newblock Projective {F}ra\"iss\'e limits and the pseudo-arc.
\newblock {\em Trans. Amer. Math. Soc.}, 358(7):3077--3096, 2006.

\bibitem{Jacelon:W}
B.~Jacelon.
\newblock A simple, monotracial, stably projectionless {$\rm{C}^*$}-algebra.
\newblock {\em J. Lond. Math. Soc. (2)}, 87(2):365--383, 2013.

\bibitem{JacelonVignati}
B.~Jacelon and A.~Vignati.
\newblock Stably projectionless fra\"{\i}ss\'{e} limits.
\newblock to appear in Studia Mathematica, 2022.

\bibitem{JiangSu}
X.~Jiang and H.~Su.
\newblock On a simple unital projectionless \cstar-algebra.
\newblock {\em Amer. J. Math.}, 121(2):359--413, 1999.

\bibitem{ThesisKaichouh}
A.~Ka\"ichouh.
\newblock {\em Metric structures and their automorphism groups :
  reconstruction, homogeneity, amenability and automatic continuity}.
\newblock PhD thesis, Universit\'e Clude Bernard, Lyon 1, 2015.

\bibitem{Katetov}
M.~Kat\v{e}tov.
\newblock On universal metric spaces.
\newblock In {\em General topology and its relations to modern analysis and
  algebra, {VI} ({P}rague, 1986)}, volume~16 of {\em Res. Exp. Math.}, pages
  323--330. Heldermann, Berlin, 1988.

\bibitem{KPT}
A.~S. Kechris, V.~G. Pestov, and S.~Todorcevic.
\newblock Fra\"{\i}ss\'{e} limits, {R}amsey theory, and topological dynamics of
  automorphism groups.
\newblock {\em Geom. Funct. Anal.}, 15(1):106--189, 2005.

\bibitem{KennedyDavidson}
M.~Kennedy and K.R. Davidson.
\newblock Noncommutative {C}hoquet theory.
\newblock arXiv:1905.08436, 2019.

\bibitem{KennedyShamovich}
M.~Kennedy and E.~Shamovich.
\newblock Noncommutative {C}hoquet simplices.
\newblock {\em Math. Ann.}, 382(3-4):1591--1629, 2022.

\bibitem{KirchWass}
E.~Kirchberg and S.~Wassermann.
\newblock {$C^\ast$}-algebras generated by operator systems.
\newblock {\em J. Funct. Anal.}, 155(2):324--351, 1998.

\bibitem{Kubis.Metric}
W.~Kubi\'s.
\newblock Metric-enriched categories and approximate fra{\"\i}ss{\'e} limits.
\newblock arXiv:1210.6506, 2012.

\bibitem{KubisFr2}
W.~Kubi\'{s}.
\newblock Injective objects and retracts of {F}ra\"{\i}ss\'{e} limits.
\newblock {\em Forum Math.}, 27(2):807--842, 2015.

\bibitem{KubisKwia}
W.~Kubi\'{s} and A.~Kwiatkowska.
\newblock The {L}elek fan and the {P}oulsen simplex as {F}ra\"{\i}ss\'{e}
  limits.
\newblock {\em Rev. R. Acad. Cienc. Exactas F\'{\i}s. Nat. Ser. A Mat. RACSAM},
  111(4):967--981, 2017.

\bibitem{KubisSolecki}
W.~Kubi\'{s} and S.~Solecki.
\newblock A proof of uniqueness of the {G}urari\u{\i} space.
\newblock {\em Israel J. Math.}, 195(1):449--456, 2013.

\bibitem{LazarLinde}
A.~J. Lazar and J.~Lindenstrauss.
\newblock On {B}anach spaces whose duals are {$L_{1}$} spaces.
\newblock {\em Israel J. Math.}, 4:205--207, 1966.

\bibitem{Lind-Olsen-Sternfeld}
J.~Lindenstrauss, G.~Olsen, and Y.~Sternfeld.
\newblock The {P}oulsen simplex.
\newblock {\em Ann. Inst. Fourier (Grenoble)}, 28(1):vi, 91--114, 1978.

\bibitem{Lupini.NCGurarij}
M.~Lupini.
\newblock Uniqueness, universality, and homogeneity of the noncommutative
  {G}urarij space.
\newblock {\em Adv. Math.}, 298:286--324, 2016.

\bibitem{Lupini.fa}
M.~Lupini.
\newblock Fra\"{\i}ss\'{e} limits in functional analysis.
\newblock {\em Adv. Math.}, 338:93--174, 2018.

\bibitem{Lusky}
W.~Lusky.
\newblock The {G}urarij spaces are unique.
\newblock {\em Arch. Math. (Basel)}, 27(6):627--635, 1976.

\bibitem{Masumoto.Real}
S.~{Masumoto}.
\newblock A {F}ra\"iss\'e theoretic approach to the {J}iang-{S}u algebra.
\newblock arXiv:1612.00646 [math.OA], December 2016.

\bibitem{Masumoto.FraisseZ}
S.~Masumoto.
\newblock The {J}iang-{S}u algebra as a {F}ra\"{\i}ss\'{e} limit.
\newblock {\em J. Symb. Log.}, 82(4):1541--1559, 2017.

\bibitem{Melleray.Hjorth}
J.~Melleray.
\newblock A note on {Hjorth}'s oscillation theorem.
\newblock {\em J. Symb. Log.}, 75(4):1359--1365, 2010.

\bibitem{TsankovMelleray}
J.~Melleray and T.~Tsankov.
\newblock Extremely amenable groups via continuous logic.
\newblock arXiv:1404.4590, 2014.

\bibitem{Oikhberg.Gurarij}
T.~Oikhberg.
\newblock The non-commutative {G}urarii space.
\newblock {\em Arch. Math. (Basel)}, 86(4):356--364, 2006.

\bibitem{Olsen}
G.~Olsen.
\newblock On simplices and the {P}oulsen simplex.
\newblock In {\em Functional analysis: surveys and recent results, {II}
  ({P}roc. {S}econd {C}onf. {F}unctional {A}nal., {U}niv. {P}aderborn,
  {P}aderborn, 1979)}, volume~68 of {\em Notas Mat.}, pages 31--52.
  North-Holland, Amsterdam-New York, 1980.

\bibitem{SolPana.Menger}
A.~Panagiotopoulos and S.~Solecki.
\newblock A combinatorial model for the {Menger} curve.
\newblock {\em J. Topol. Anal.}, 14(1):203--229, 2022.

\bibitem{Phelps}
R.~R. Phelps.
\newblock {\em Lectures on {C}hoquet's theorem}, volume 1757 of {\em Lecture
  Notes in Mathematics}.
\newblock Springer-Verlag, Berlin, second edition, 2001.

\bibitem{Pisier}
G.~Pisier.
\newblock {\em Introduction to operator space theory}, volume 294 of {\em
  London Mathematical Society Lecture Note Series}.
\newblock Cambridge University Press, Cambridge, 2003.

\bibitem{Poulsen}
E.T. Poulsen.
\newblock A simplex with dense extreme points.
\newblock {\em Ann. Inst. Fourier (Grenoble)}, 11:83--87, XIV, 1961.

\bibitem{Razak}
S.~Razak.
\newblock On the classification of simple stably projectionless
  {$\rm{C}^*$}-algebras.
\newblock {\em Canad. J. Math.}, 54(1):138--224, 2002.

\bibitem{Robert:2010qy}
L.~Robert.
\newblock Classification of inductive limits of 1-dimensional {NCCW} complexes.
\newblock {\em Adv. Math.}, 231(5):2802--2836, 2012.

\bibitem{Rordam.Ktheory}
M.~R{\o}rdam, F.~Larsen, and N.~Laustsen.
\newblock {\em An introduction to {{\(K\)}}-theory for {{\(C^*\)}}-algebras},
  volume~49 of {\em Lond. Math. Soc. Stud. Texts}.
\newblock Cambridge: Cambridge University Press, 2000.

\bibitem{Schoretsanitis}
K.~Schoretsanitis.
\newblock {\em Fraisse theory for metric structures}.
\newblock PhD thesis, Thesis (Ph.D.)--University of Illinois at
  Urbana-Champaign, 2007.

\bibitem{SikoZara}
R.~Sikorski and K.~Zarankiewicz.
\newblock On uniformization of functions. {I}.
\newblock {\em Fundam. Math.}, 41:339--344, 1955.

\bibitem{ThielWinter}
H.~Thiel and W.~Winter.
\newblock The generator problem for {$\mathcal {Z}$}-stable
  {$\rm{C}^*$}-algebras.
\newblock {\em Trans. Amer. Math. Soc.}, 366(5):2327--2343, 2014.

\bibitem{Winter.ICM}
W.~Winter.
\newblock Structure of nuclear {$\rm C^*$}-algebras: from quasidiagonality to
  classification and back again.
\newblock In {\em Proceedings of the {I}nternational {C}ongress of
  {M}athematicians---{R}io de {J}aneiro 2018. {V}ol. {III}. {I}nvited
  lectures}, pages 1801--1823. World Sci. Publ., Hackensack, NJ, 2018.

\bibitem{WinterZac.Dimnuc}
W.~Winter and J.~Zacharias.
\newblock The nuclear dimension of \cstar-algebras.
\newblock {\em Adv. Math.}, 224(2):461--498, 2010.

\bibitem{Zucker1}
A.~Zucker.
\newblock Big {R}amsey degrees and topological dynamics.
\newblock {\em Groups Geom. Dyn.}, 13(1):235--276, 2019.

\end{thebibliography}
\printindex
\end{document}